\documentclass[
%%% one of
submission
%final
%proceedings
%%% if you compile a final version for the old OJS platform
% , ojs
%%% if all authors have the same affiliation
% , nomarks
]{dmtcs-episciences}

%%%%%%%%%%%%%%%%%%%%%%%%%%%%%%%%%%%%%%%%%%%%%%%%%%%%%%
\usepackage{latexsym}%%triangles:\lhd\rhd\unlhd\unrhd%
%%%%%%%%%%%%%%%%%%%%%%%%%%%%%%%%%%%%%%%%%%%%%%%%%%%%%%
%\usepackage{diagrams}%%[height=10mm,width=10mm,abut]%
%%%%%%%%%%%%%%%%%%%%%%%%%%%%%%%%%%%%%%%%%%%%%%%%%%%%%%
%%%%%%%%%%%%%%%%%%%%%%%%%%%%%%%%%%%%%%%%%%%%%%%%%%%%%%
\usepackage{amsmath}%%%%%%%%%%%%%%%%%%%%%%%%%%%%%%%%%
%%%%%%%%%%%%%%%%%%%%%%%%%%%%%%%%%%%%%%%%%%%%%%%%%%%%%

%%%%%%%%%%%%%%%%%%%%%%%%%%%%%%%%%%%%%%%%%%%%%%%%%%%%%%
%%%%%%%%%%%%%%%%%%%%%%%%%%%%%%%%%%%%%%%%%%%%%%%%%%%%%%
%%%%%%%%%%%%%%%%%%%%%%%%%%%%%%%%%%%%%%%%%%%%%%%%%%%%%%
%CCCCCCCCCCCCCCCCCCCCCCCCCCCCCCCCCCCCCCCCCCCCCCCCCCCC%
%%%%%%%%%%%%%%%%% spaces and sizes %%%%%%%%%%%%%%%%%%%
%\qedhere %%\qed%%%%%end of proof%%
%

\newcommand{\qdn}{\hspace*{-1.5mm}}
\newcommand{\qqdn}{\hspace*{-2.5mm}}
\newcommand{\xqdn}{\hspace*{-5.0mm}}
\newcommand{\xxqdn}{\hspace*{-10mm}}

%%%%%%%%%%%%%%%%%%%%%%%%%%%%%%%%%%%%%

\newcommand{\fns}{\footnotesize}
\newcommand{\sst}{\scriptstyle}

%%%%%%%%%%%%%%%%%%%%%%%%%%%%%%%%%%%%%%%%%%%%%%%%%%%
%%%%%%%%%%%% hypergeometric series %%%%%%%%%%%%%%%%
%%%%%%%%%%%%%%%%%%%%%%%%%%%%%%%%%%%%%%%%%%%%%%%%%%%

\newcommand{\ffnk}[4]{\left[\qdn\ba{#1}#3\\#4\ea{\!\Big|\:#2}\right]}

%%%%%%%%%%%%%%%%%%%%%%%%%%%%%%%%%%%%%%%%%%%%%%%%%%%%%%%
%%%%%%%%%%%%%% genfrac applications %%%%%%%%%%%%%%%%%%%
%%%%%%%%%%%%%%%%%%%%%%%%%%%%%%%%%%%%%%%%%%%%%%%%%%%%%%%

\newcommand{\binm}{\binom}

%\newcommand{\xbnm}[2]{\genfrac{(}{)}{0mm}{3}{#1}{#2}}

%\newcommand{\xfrac}[2]{\genfrac{}{}{}{3}{#1}{#2}}

%%%%%%%%%%%%%%%%%%%%%%%%%%%%%%%%%%%%%%%%%%%%%%%%%%%%%%
%EEEEEEEEEEEEEEEEEEEEEEEEEEEEEEEEEEEEEEEEEEEEEEEEEEEE%
%%%%%%%%%%%%%%%%%%%%% equations %%%%%%%%%%%%%%%%%%%%%%

\newcommand{\nnm}{\nonumber}
\newcommand{\be}{\begin{equation}}
\newcommand{\ee}{\end{equation}}
\newcommand{\ba}{\begin{array}}
\newcommand{\ea}{\end{array}}
\newcommand{\bmn}{\begin{eqnarray}}
\newcommand{\emn}{\end{eqnarray}}
\newcommand{\bnm}{\begin{eqnarray*}}
\newcommand{\enm}{\end{eqnarray*}}
\newcommand{\bln}{\begin{subequations}}
\newcommand{\eln}{\end{subequations}}

%%%%%%%%%%%%%%%%%%%%%%%%%%%%%%%%%%%%%%%%%%%%%%%%%%%%%%%%%%%%%%

\newcommand{\lam}{\lambda}

%%%%MMMMMMMMMMMMMMMMMMMMMMMMMMMMMMMMMMMMMMMMMMMMMMMMMMMM%%%%%
%%%%%%%%%%%%%%%%%%%%%%%%%%%%%%%%%%%%%%%%%%%%%%%%%%%%%%
%HHHHHHHHHHHHHHHHHHHHHHHHHHHHHHHHHHHHHHHHHHHHHHHHHHHH%
%%%%%%%%%%%%%%%% theorems and items %%%%%%%%%%%%%%%%%%

%%\theoremstyle{plain}%default
\newtheorem{thm}{Theorem}%[section]
\newtheorem{lemm}[thm]{Lemma}
\newtheorem{corl}[thm]{Corollary}

\newtheorem{exam}{Example}

%%%%%%%%%%%%%%%%%%%%%%%%%%%%%%%%%%%%%%%%%%%%%%%%%%%%%%%%%%%%%%
%%%%MMMMMMMMMMMMMMMMMMMMMMMMMMMMMMMMMMMMMMMMMMMMMMMMMMMM%%%%%
%%%%%%%%%%%%%%%%%%%%% References %%%%%%%%%%%%%%%%%%%%%%%%%%%%%

%

% DON'T LOAD ANY STYLES THAT CHANGE THE PAGE LAYOUT
% AND DON'T CHANGE THE PAGE LAYOUT BY HAND, EITHER.

\usepackage[utf8]{inputenc}
\usepackage{subfigure}

% graphicx is now loaded automatically no need to put this in here anymore.
%
%\usepackage{graphicx}

% We strongly recommend to use natbib. Your colleagues deserve to be
% named in your text. PLEASE, ADAPT YOUR TEXT ACCORDINGLY, such that
% citations are grammatically correct.
\usepackage[round]{natbib}

\author{Chuanan Wei\affiliationmark{1}\thanks{He is supported by the National Natural Science Foundation of China (No. 11661032).}
  \and Lily Li Liu\affiliationmark{2}\thanks{She is supported by the National Natural Science Foundation of China (No. 11871304).}
 \and Dianxuan Gong\affiliationmark{3}\thanks{I am the corresponding author and am supported by the National Natural Science Foundation of China (No. 11601151).}
  }
\title[Summation formulas for Fox-Wright function]{Summation formulas for Fox-Wright function}
% put your affiliation here, not your full address.
% If you like to give away your email or other parts of your address,
% THIS IS NOT THE RIGHT PLACE, your address will change, this paper
% will not.
% Just watch that your personal data that you want to communicate on
% the episcience server is always up to date.
\affiliation{
  % one line per affiliation, no postal codes, grant numbers or similar
   Department of Medical Informatics,
 Hainan Medical University, Haikou, China\\
  School of Mathematical Sciences,
 Qufu Normal University, Qufu, China\\
 College of Sciences,
  North China University of Science and Technology, Tangshan, China}
\keywords{Hypergeometric series, Fox-Wright function, Inversion
techniques}
% don't try to cheat here, we will check the dates!
\received{2008-05-01} \revised{2018-09-19}
\accepted{2018-09-19}
\begin{document}
\publicationdetails{20}{2018}{2}{9}{4476}
 \maketitle
\begin{abstract}
   By means of inversion techniques and several known hypergeometric
series identities, summation formulas for Fox-Wright function are
explored. They give some new hypergeometric series identities when
the parameters are specified.
\end{abstract}

\section{Introduction}
 For a complex variable $x$ and an nonnegative integer $n$, define
the shifted-factorial to be
\[(x)_0=1
\quad\text{and}\quad (x)_n=x(x+1)\cdots(x+n-1) \quad\text{when}\quad
n=1,2,\cdots.\]
  Following \cite{andrews-r}, define the hypergeometric series
  by
\[_{1+r}F_s\ffnk{cccc}{z}{a_0,&a_1,&\cdots,&a_r}{&b_1,&\cdots,&b_s}
 \:=\:\sum_{k=0}^\infty
\frac{(a_{0})_{k}(a_{1})_{k}\cdots(a_{r})_{k}}
 {k!(b_{1})_{k}\cdots(b_{s})_{k}}z^k,\]
where $\{a_{i}\}_{i\geq0}$ and $\{b_{j}\}_{j\geq1}$ are complex
parameters such that no zero factors appear in the denominators of
the summand on the right hand side. Then Whipple's $_3F_2$-series
identity (cf. \cite{andrews-r}[p. 149]) can be stated as
 \bmn\label{whipple}
 _3F_2\ffnk{cccc}{1}{a,1-a,b}{c,1+2b-c}
=\frac{\Gamma(\frac{c}{2})\Gamma(\frac{1+c}{2})\Gamma(b+\frac{1-c}{2})\Gamma(b+\frac{2-c}{2})}
{\Gamma(\frac{a+c}{2})\Gamma(\frac{1-a+c}{2})\Gamma(b+\frac{1+a-c}{2})\Gamma(b+\frac{2-a-c}{2})},
 \emn
where $Re(b)>0$ and $\Gamma(x)$ is the well-known gamma function
\[\xqdn\Gamma(x)=\int_{0}^{\infty}t^{x-1}e^{-t}dt\quad\text{with}\quad
Re(x)>0.\] About the research for the generalizations of Watson's
$_3F_2$-series identity and \eqref{whipple} with integer parameters,
the reader may refer to the papers
\cite{chu-b,chu-c,lavoie-a,lavoie-b,wimp,zeilberger}. The
corresponding $q$-analogues can be found in \cite{wei-b}.  Some
strange evaluations of hypergeometric series can be seen in the
papers \cite{gessel,wang-a,wang-b,wang-c}.

Recall Fox-Wright function $_p\Psi_q$
(cf.~\cite{fox,wright-a,wright-b}; see also \cite{srivastava-a}[p.
21], which is defined by
 \bnm
 {_p\Psi_q}\ffnk{ccc}{z}
{(\alpha_1;A_1),\:\:\cdots,&\!(\alpha_p;A_p)}
{(\beta_1;B_1),\:\:\cdots,&\!(\beta_q;B_q)}
 =\sum_{k=0}^{\infty}\frac{\prod_{i=1}^p\Gamma(\alpha_i+A_ik)}
 {\prod_{j=1}^q\Gamma(\beta_j+B_jk)}\frac{z^k}{k!}
\enm
 and regarded as a generalization of hypergeometric series,
 where the coefficients $\{A_i\}_{i\geq1}$ and $\{B_j\}_{j\geq1}$ are
positive real numbers such that
\[1+\sum_{j=1}^qB_j-\sum_{i=1}^pA_i\geq0.\]
Naturally, the domain where $\{A_i\}_{i\geq1}$ and
$\{B_j\}_{j\geq1}$ take on values can be extended to the complex
field and the upper coefficient relation is ignored when the series
is terminating. For convenience, we shall frequently use the symbol
\[(x_1,x_2,\cdots,x_r;A):=\prod_{i=1}^r\Gamma(x_i+Ak)\]
in the Fox-Wright function.

The importance of Fox-Wright function lies in that it can be applied
to many fields. \cite{miller-a} offered the applications of
Fox-Wright function to the solution of algebraic trinomial equations
and to a problem of information theory. \cite{mainardi} told us that
Fox-Wright function plays an important role in finding the
fundamental solution of the factional diffusion equation of
distributed order in time. More applications of the function can be
found in
\cite{craven,darus,miller-b,murug,pogany,srivastava-b,srivastava-c}.
 The following fact should be
mentioned. \cite{aomoto} introduced the quasi hypergeometric
function, which is exactly multiple Fox-Wright function, and showed
that it satisfies a system of difference-differential equations.

One pair of inverse relations implied in the works of
\cite{bressoud} and \cite{gasper} (see also \cite{chu-a}[p. 17]) can
be expressed as follows.

\begin{lemm}\label{lemm-a}
Let $x,y$ and $z$ be complex numbers. Then the system of equations
 \bmn\label{inver-a}
  \quad f(n)=\sum_{k=0}^{n}(-1)^k\binm{n}{k}
  \frac{x+zk+k}{(x+zn)_{1+k}}
 \frac{y-zk+k}{(y-zn)_{1+k}} (z^{-1}(x-y)+k)_n
 g(k)
  \emn
is equivalent to the system of equations
 \bmn\label{inver-b}
\:g(n)=\sum_{k=0}^{n}(-1)^k\binm{n}{k}(x+zk)_n(y-zk)_n
\frac{z^{-1}(x-y)+2k}{(z^{-1}(x-y)+n)_{1+k}} f(k).
 \emn
\end{lemm}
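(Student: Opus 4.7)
My plan is to verify that \eqref{inver-a} and \eqref{inver-b} represent mutually inverse lower-triangular linear transformations on sequences. Write the two systems in matrix form as $f=Ag$ and $g=Bf$, where the kernels $A_{n,k}$ and $B_{n,k}$ are the coefficients appearing in \eqref{inver-a} and \eqref{inver-b} respectively. Both $A$ and $B$ are triangular ($A_{n,k}=B_{n,k}=0$ for $k>n$), and each has nonvanishing diagonal entries, which one checks by direct substitution $k=n$. Consequently the equivalence of the two systems reduces to the single orthogonality relation
\[
\sum_{k=m}^{n} A_{n,k}\,B_{k,m} \;=\; \delta_{n,m},\qquad 0\le m\le n,
\]
and it therefore suffices to prove this identity.

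The diagonal case $n=m$ is a single-term check in which the Pochhammer factors cancel and yield $1$. For $n>m$, my approach is to substitute the explicit kernels, pull out all factors independent of the summation index $k$, and shift $k=m+j$ with $0\le j\le n-m$. Using standard Pochhammer identities such as $(a)_{1+k}=a\,(a+1)_k$, ratio identities of the form $(a)_n/(a)_{1+k}=(a+1+k)_{n-1-k}$, and the reflection $(a)_n=(-1)^n(1-a-n)_n$, the inner sum should collapse to a terminating hypergeometric series in $j$. The essential feature is that the factor $z^{-1}(x-y)+2k$ supplied by $B_{k,m}$, combined with the denominator Pochhammer $(z^{-1}(x-y)+n)_{1+k}$ inherited from $A_{n,k}$, produces exactly the well-poised parameter pairing that makes the emerging $_3F_2$ Saalschützian.

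Once the sum has been put in balanced $_3F_2$ form with one upper parameter equal to $-(n-m)$, the Pfaff--Saalschütz theorem delivers the value $0$, completing the proof.

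The main obstacle I foresee is the algebraic reorganization needed to exhibit the balanced parameter structure; keeping track of the six Pochhammer symbols appearing in the product $A_{n,k}B_{k,m}$ is delicate but essentially mechanical. If the direct hypergeometric reduction proves unwieldy, I would fall back on the telescoping technique used by Bressoud and Gasper in the papers cited just before Lemma~\ref{lemm-a}: seek an explicit auxiliary function $C_{n,k,m}$ with $A_{n,k}B_{k,m}=C_{n,k+1,m}-C_{n,k,m}$, from which the orthogonality follows immediately by telescoping over $k\in\{m,\dots,n\}$.
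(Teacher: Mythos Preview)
The paper does not actually prove Lemma~\ref{lemm-a}; it is quoted from the literature with references to Bressoud, Gasper, and Chu. So there is no in-paper argument to compare against, and the question becomes whether your outline would independently succeed.

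Your setup (triangular kernels, reduction to the single orthogonality $\sum_{k=m}^{n}A_{n,k}B_{k,m}=\delta_{n,m}$) is fine. The problem is the claimed endgame. After the shift $k=m+j$ the summand is indeed a hypergeometric term in~$j$, but it is \emph{not} a ${}_3F_2$, balanced or otherwise. The culprit is the factor $(x+zk)_n(y-zk)_n$ coming from $B$: for generic~$z$ this is a polynomial in~$k$ of degree $2n$ whose roots are spaced by~$1$ within each block but whose blocks shift by~$z$ as $k$ moves, so it cannot be absorbed into a fixed finite list of Pochhammer parameters. Equivalently, the ratio $t_{j+1}/t_j$ is rational but its numerator and denominator have degree growing with $n-m$, so the series is a ${}_{p}F_{q}$ with $p,q$ of order~$n-m$, and Pfaff--Saalsch\"utz is simply inapplicable. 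The same obstruction hits the other orthogonality $\sum_k B_{n,k}A_{k,m}$ through the factors $(x+zk)_n/(x+zk)_{1+m}$. (Your remark that the factor $z^{-1}(x-y)+2k$ in $B_{k,m}$ produces the very-well-poised pairing is also slightly off: in $\sum_k A_{n,k}B_{k,m}$ that factor is $z^{-1}(x-y)+2m$, a constant in~$k$.)

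What does work is precisely your fallback. In the cited sources the orthogonality is established not by summation theorems but by exhibiting an explicit antidifference $C_{n,k,m}$ with $A_{n,k}B_{k,m}=C_{n,k+1,m}-C_{n,k,m}$ (or, in Gasper's version, a short partial-fraction identity), after which the sum telescopes to boundary terms that vanish for $m<n$. If you want a self-contained proof, promote the telescoping plan to the main argument and drop the Pfaff--Saalsch\"utz route.
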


Although the importance of Fox-Wright function has been realized for
many years, there are only a small number of summation formulas for
this function to our knowledge. The corresponding results, which can
be seen in the papers \cite{chu-d,krattenthaler,wei-a}, are all from
inversion techniques and identities related to Saalschutz's theorem.
 The reader may refer to \cite{ma-a,ma-b,warnaar} for more
details on inversion techniques.

Inspired by the importance of Fox-Wright function and the lack of
summation formulas for this function, we shall derive several
summation formulas for Fox-Wright function according to Lemma
\ref{lemm-a}, \eqref{whipple} and some other hypergeometric series
identities in Sections 2-3.

%%%%%%%%%%%%%%%%%%%%%%%%%%%%%%%%%%%%%%%%%%%%%%%%%%%%%%
%%%%%%%%%%%%%%%%%%%%%%%%%%%%%%%%%%%%%%%%%%%%%%%%%%%%%%
\section{Whipple-type series and summation formulas for Fox-Wright function}
%%%%%%%%%%%%%%%%%%%%%%%%%%%%%%%%%%%%%%%%%%%%%%%%%%%%%%
%%%%%%%%%%%%%%%%%%%%%%%%%%%%%%%%%%%%%%%%%%%%%%%%%%%%%%

\begin{thm}\label{thm-a}
Let $a$ and $\lambda$ be complex numbers. Then
 \bnm
&&\xqdn{_5\Psi_6}\ffnk{ccccccc}{-1}
{\fns{(1,\frac{3}{2};1),(\lambda-a;\lambda),(\frac{1}{2}+\lambda-a;1\!+\!\lambda),(1+2a+n;1\!+\!2\lambda)}}
{\fns(1+n;-1),(2+n,\frac{1}{2};1),(1+a;\lambda),(\frac{3}{2}+a;1\!+\!\lambda),(1+2\lambda\!-\!2a\!-\!n;1\!+\!2\lambda)}\\
 &&\xqdn\:\:=\:\:
\frac{(-1)^n4^{2a-\lambda}(1+2a-\lambda)_n}
 {(\lambda-a+\lambda n)(1+2a+2\lambda n+2n)(1)_n}.
 \enm
\end{thm}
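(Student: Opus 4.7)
The plan is to derive Theorem~\ref{thm-a} by applying the inversion pair of Lemma~\ref{lemm-a} to a terminating instance of Whipple's identity~\eqref{whipple}. The idea is to choose the free parameters $x,y,z$ in Lemma~\ref{lemm-a} and a sequence $f(k)$ so that the summand of~\eqref{inver-b} coincides with Whipple's $_3F_2$ summand after specializing one of Whipple's numerator parameters to $-n$. The right-hand side of~\eqref{whipple} then supplies $g(n)$ in closed form; by Lemma~\ref{lemm-a} the dual equation~\eqref{inver-a} automatically yields a summation formula for $f(n)$, and the claim is that, after standard Gamma-function manipulations, this is exactly the Fox-Wright evaluation in the theorem.

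To make the parameter identification I would read off the right-hand side of the theorem as a guide. The Pochhammer $(1+2a-\lambda)_n$ suggests that $z^{-1}(x-y)$ should equal $1+2a-\lambda$, possibly up to a shift coming from~\eqref{inver-b}. The linear factors $1/(\lambda-a+\lambda n)$ and $1/(1+2a+2(1+\lambda)n)$, together with the power $4^{2a-\lambda}$, form the characteristic signature of Legendre's duplication formula $\Gamma(w)\Gamma(w+\tfrac12)=2^{1-2w}\sqrt{\pi}\,\Gamma(2w)$ applied to the four Gammas in the denominator of the right-hand side of~\eqref{whipple}. Coupling these observations with a matching choice of $b$ and $c$ in~\eqref{whipple} (linear in $a$ and $\lambda$) should uniquely determine $x,y,z$ and $f(k)$.

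With $g(n)$ so determined, I would substitute $g(k)$ into~\eqref{inver-a}, rewrite each Pochhammer $(\alpha)_m$ as a Gamma-ratio $\Gamma(\alpha+m)/\Gamma(\alpha)$, and absorb the polynomial numerators $x+(z+1)k$ and $y+(1-z)k$ as single-step shifts of Gamma arguments. The Pochhammer $(z^{-1}(x-y)+k)_n$ then pairs with two of the Gammas in $g(k)$ via the duplication formula to generate the $(1+2\lambda)$-step Gammas $\Gamma(1+2a+n+(1+2\lambda)k)$ and $\Gamma(1+2\lambda-2a-n+(1+2\lambda)k)$ appearing in the Fox-Wright summand, while the $\lambda$-step and $(1+\lambda)$-step Gammas are read off directly from $g(k)$. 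Collecting the scalar prefactors reconstructs the $4^{2a-\lambda}$.

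The main obstacle is bookkeeping rather than conceptual difficulty: every manipulation is elementary (Pochhammer-to-Gamma conversion, single-step shifts, and repeated application of the duplication formula), but there are many of them, and a single miscollected constant can obscure the final identity. Verifying the $n=0$ case by direct evaluation (both sides reduce to $4^{2a-\lambda}/[(1+2a)(\lambda-a)]$ via one application of Legendre duplication) provides a useful consistency check, and an intermediate check on the coefficient of $(1+2a-\lambda)_n$ for small $n$ would confirm that the parameter matching has been carried out correctly.
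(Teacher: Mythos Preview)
Your overall strategy is exactly the paper's: specialize Whipple's identity~\eqref{whipple} to a terminating series by setting one numerator parameter to $-n$, recognize the result as one side of the inversion pair in Lemma~\ref{lemm-a}, and read off the dual relation as the Fox--Wright evaluation. However, you have the two halves of the inversion pair interchanged, and this is not merely a relabeling.

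In the paper, the terminating Whipple sum is matched to~\eqref{inver-a}, not to~\eqref{inver-b}. This works because the kernel of~\eqref{inver-a} carries the factors $1/(x+zn)_{1+k}$ and $1/(y-zn)_{1+k}$, Pochhammers of length $1+k$ whose \emph{base} depends on~$n$; choosing $x=1+2a$, $y=2a-2\lambda$, $z=1+2\lambda$ and then $c=2+2a+(1+2\lambda)n$, $b=1+2a-\lambda$ makes these line up with Whipple's denominators $(c)_k$ and $(1+2b-c)_k$. The sequence $g(k)$ is what absorbs the leftover $(b)_k$; the closed form on Whipple's right-hand side becomes~$f(n)$. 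The theorem is then the dual relation~\eqref{inver-b}, whose kernel contains $(x+zk)_n(y-zk)_n$, and it is precisely these length-$n$ Pochhammers with $z=1+2\lambda$ that produce the $(1+2\lambda)$-step Gammas $\Gamma(1+2a+n+(1+2\lambda)k)$ and $\Gamma(1+2\lambda-2a-n+(1+2\lambda)k)$ in the Fox--Wright summand --- no duplication formula is needed for them. Your proposal to match Whipple to~\eqref{inver-b} instead cannot succeed for generic~$\lambda$: the factors $(x+zk)_n(y-zk)_n$ are degree-$2n$ polynomials in~$k$ which, for $z=1+2\lambda\neq\pm1,0$, do not factor as a ratio of $k$-Pochhammers, so no choice of an $n$-independent $f(k)$ will reduce the summand to a $_3F_2$ term.

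Two smaller points follow from this reversal. First, $z^{-1}(x-y)$ is not $1+2a-\lambda$ but simply~$1$ (so that $(z^{-1}(x-y)+2k)/(z^{-1}(x-y)+n)_{1+k}=(1+2k)/(1+n)_{1+k}$, which accounts for the pair $(\tfrac32;1)/(\tfrac12;1)$ and $(2+n;1)$ in the Fox--Wright symbol); the Pochhammer $(1+2a-\lambda)_n$ on the right-hand side comes from the chosen $g(n)$, not from the inversion kernel. Second, the $\lambda$- and $(1+\lambda)$-step Gammas in the summand arise from the Whipple closed form $f(k)$ (terms like $(1+a+\lambda k)_k$ and $(\lambda-a+\lambda k)_k$), while the duplication formula and the power $4^{2a-\lambda}$ enter only when one rewrites the final identity from Pochhammers into Fox--Wright notation. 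Once you swap the roles of~\eqref{inver-a} and~\eqref{inver-b} to agree with the paper, the remaining bookkeeping is as you describe.
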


\begin{proof}
The case $a=-n$ of \eqref{whipple} reads
 \bnm
 _3F_2\ffnk{cccc}{1}{-n,1+n,b}{c,1+2b-c}
=\frac{(\frac{c-n}{2})_n(c-2b)_n}{(\frac{c-n}{2}-b)_n(c)_n}.
 \enm
Perform the replacements $b\to 1+2a-\lambda$ and $c\to2+2a+2\lambda
n+n$ to obtain
 \bnm
&&\sum_{k=0}^n(-1)^k\:\binm{n}{k}
 \frac{1+2a+2\lam k+k+k}{(1+2a+2\lam n+n)_{1+k}}
 \frac{2a-2\lam-2\lam k-k+k}{(2a-2\lam-2\lambda n-n)_{1+k}}(1+k)_n\\
 &&\:\:\times\:
  \frac{(1)_k(1+2a-\lam)_k}{(1+2a+2\lam k+2k)(2a-2\lam-2\lam k)}\\
&& \:\:=\:\frac{(1)_n}{2a-2\lam-2\lambda n-n} \frac{(1+a+\lambda
n)_n(2\lam-2a+2\lambda n+n)_n}{(\lam-a+\lambda n)_n(1+2a+2\lam
n+n)_{1+n}},
 \enm
which fits to \eqref{inver-a} with
 \bnm
&&\xxqdn x=1+2a,\quad y=2a-2\lam, \quad z=1+2\lam; \\
&&\xxqdn g(k)=\frac{(1)_k(1+2a-\lam)_k}{(1+2a+2\lam
k+2k)(2a-2\lam-2\lam k)};\\
&&\xxqdn f(n)=\frac{(1)_n}{2a-2\lam-2\lambda n-n} \frac{(1+a+\lambda
n)_n(2\lam-2a+2\lambda n+n)_n}{(\lam-a+\lambda n)_n(1+2a+2\lam
n+n)_{1+n}}.
 \enm
Then \eqref{inver-b} produces the dual relation
 \bnm
&&\xqdn\sum_{k=0}^n(-1)^k\binm{n}{k}(1+2a+2\lam k+k)_n(2a-2\lam-2\lambda k-k)_n\frac{1+2k}{(1+n)_{1+k}}\\
 &&\xqdn\:\times\:\,
  \frac{(1)_k}{2a-2\lam-2\lambda k-k} \frac{(1+a+\lambda k)_k(2\lam-2a+2\lambda k+k)_k}
  {(\lam-a+\lambda k)_k(1+2a+2\lam k+k)_{1+k}}\\
 &&\xqdn\:=\:\frac{(1)_n(1+2a-\lam)_n}{(1+2a+2\lam n+2n)(2a-2\lam-2\lam n)}.
 \enm
Rewrite it in accordance with Fox-Wright function, we get Theorem
\ref{thm-a}.
\end{proof}

 When $\lambda=0$, Theorem
\ref{thm-a} reduces to the hypergeometric series identity
 \bnm\quad
  _5F_4\ffnk{ccccc}{1}{1,\frac{3}{2},\frac{1}{2}-a,1+2a+n,-n}{\frac{1}{2},\frac{3}{2}+a,1-2a-n,2+n}
  =\frac{1+2a}{1+2a+2n}\frac{(2)_n}{(2a)_n},
 \enm
which is a special case of  Dougall's $_5F_4$-series identity (cf.
\cite{andrews-r}[p. 71]):
 \bmn\label{dougall}
 &&\xqdn_5F_4\ffnk{cccc}{1}{a,1+\frac{a}{2},b,c,d}{\frac{a}{2},1+a-b,1+a-c,1+a-d}
  \nnm\\
&&\xqdn\:\:=\:\frac{\Gamma(1+a-b)\Gamma(1+a-c)\Gamma(1+a-d)\Gamma(1+a-b-c-d)}
{\Gamma(1+a)\Gamma(1+a-b-c)\Gamma(1+a-b-d)\Gamma(1+a-c-d)}
 \emn
provided that $Re(1+a-b-c-d)>0$.

 Other two hypergeometric series identities from Theorem \ref{thm-a} can be
displayed as follows.

\begin{exam}[$\lambda=1$ in Theorem \ref{thm-a}]
 \bnm
&&\xqdn_9F_8\ffnk{ccccccccc}{1}
{1,\frac{3}{2},1-a,\frac{3-2a}{4},\frac{5-2a}{4},\frac{1+2a+n}{3},\frac{2+2a+n}{3},\frac{3+2a+n}{3},-n}
{\frac{1}{2},1+a,\frac{5+2a}{4},\frac{3+2a}{4},\frac{5-2a-n}{3},\frac{4-2a-n}{3},\frac{3-2a-n}{3},2+n}\\
  &&\xqdn\:\:=\:\frac{2a(1-a)(1+2a)}{(1-a-n)(2a+n)(1+2a+4n)}\frac{(2)_n}{(2a-2)_n}.
 \enm
\end{exam}

\begin{exam}[$\lambda=2$ in Theorem \ref{thm-a}]
 \bnm
&&\qqdn\xxqdn_{13}F_{12}\ffnk{ccccccccccc}{1}
{1,\frac{3}{2},\frac{2-a}{2},\frac{3-a}{2},\frac{5-2a}{6},\frac{7-2a}{6},\frac{9-2a}{6},\{\frac{i+2a+n}{5}\}_{i=1}^5,-n}
{\frac{1}{2},\frac{2+a}{2},\frac{1+a}{2},\frac{7+2a}{6},\frac{5+2a}{6},\frac{3+2a}{6},\{\frac{10-i-2a-n}{5}\}_{i=1}^5,2+n}\\
  &&\qdn\xxqdn\:\:=\:\frac{(2-a)(1+2a)}{(2-a+2n)(1+2a+6n)}\frac{(2a-1)_n(2)_n}{(2a-4)_n(2a+1)_n}.
 \enm
\end{exam}

\begin{thm}\label{thm-b}
Let $a$ and $\lambda$ be complex numbers. Then
 \bnm
\:\:\sum_{i=0}^m(-1)^i\frac{(-m)_i}{i!}\Omega(\lambda,a,i,m,n)=
 \frac{(-1)^{n}2^{4a-2\lambda-m-1}(1+2a-\lambda-\frac{m}{2})_n}{(1+2a+2\lambda n+2n)(\lambda-a+\lambda n)(1)_n},
 \enm
where the expression on the left hand side is
 \bnm
&&\xxqdn\Omega(\lambda,a,i,m,n)\\
 &&\xxqdn\:={_7\Psi_8}\ffnk{ccccccc}{-1}
  {(1,\frac{3}{2};1),(\frac{m-i}{2}+\lambda-a;\lambda),(\frac{1+m-i}{2}+\lambda-a;1+\lambda),\\
(\frac{3+m}{2}-i+\lambda,1+2a+n;1+2\lambda),(1-i+2\lambda;2+4\lambda)}
{(1+n;-1),(2+n,\frac{1}{2};1),(\frac{2-i}{2}+a;\lambda),(\frac{3-i}{2}+a;1+\lambda),\\
(\frac{1+m}{2}-i+\lambda,1+2\lambda-2a-n;1+2\lambda),(2+m-i+2\lambda;2+4\lambda)}.
\enm
\end{thm}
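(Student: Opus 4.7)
The plan is to mirror the template used for Theorem \ref{thm-a}, but starting from an extension of Whipple's $_3F_2$-identity \eqref{whipple} that carries an extra non\-negative integer parameter $m$. A natural candidate is the contiguous generalization of Whipple's formula obtained by shifting $b\to b-\tfrac{m}{2}$ (of the Lavoie--Grondin--Rathie type cited in \cito{lavoie-b}); when the resulting quotients of shifted factorials are expanded, they produce a finite sum over $i$ from $0$ to $m$ with weights proportional to $(-m)_i/i!$, while the Whipple parameters pick up the shifts $\tfrac{m-i}{2}$ and $\tfrac{1+m-i}{2}$. This matches the outer $i$-summation in Theorem \ref{thm-b} and the recurring patterns $\tfrac{m-i}{2}+\lambda-a$ and $\tfrac{1+m-i}{2}+\lambda-a$ visible in $\Omega(\lambda,a,i,m,n)$.

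After specializing one parameter to $-n$ to terminate the series, I would perform the substitutions $b\to 1+2a-\lambda-\tfrac{m}{2}$ and $c\to 2+2a+2\lambda n+n$, paralleling the proof of Theorem \ref{thm-a}. The inner $k$-sum is then cast into the format of equation \eqref{inver-a} with
\[
x=1+2a,\qquad y=2a-2\lambda-m,\qquad z=1+2\lambda,
\]
from which one reads off the corresponding $g(k)$ and $f(n)$. Applying the inverse relation \eqref{inver-b} of Lemma \ref{lemm-a} yields the dual identity, whose left-hand side, once Pochhammers are reassembled into $\Gamma$-quotients under the $(\cdots;A)$ convention, takes precisely the $_7\Psi_8$ form of $\Omega(\lambda,a,i,m,n)$; the outer factor $(-m)_i(-1)^i/i!$ is preserved throughout. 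I expect the two quadratic factors $(1-i+2\lambda;2+4\lambda)$ and $(2+m-i+2\lambda;2+4\lambda)$ to emerge from splitting the bilinear denominators $(y-zn)_{1+k}$ and $(y-zk)_n$ inherited from \eqref{inver-b} into their even/odd constituents via the duplication identity for $\Gamma$.

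The principal obstacle is bookkeeping: tracking how the integer shifts indexed by $i$ and $m$ interact with the $\lambda$-scaled shifts coming from the inversion, and verifying that the simple right-hand side (which is independent of $i$) is indeed the image of $f(n)$ under \eqref{inver-b}. In particular, one must check that the prefactor $2^{4a-2\lambda-m-1}$ and the shifted Pochhammer $(1+2a-\lambda-\tfrac{m}{2})_n$ on the right assemble correctly from the collapsed $\Gamma$-products, using the duplication formula and the elementary identities $\Gamma(x+n)/\Gamma(x)=(x)_n$ and $(x)_{n+k}=(x)_n(x+n)_k$. Once this identification is verified, the translation into Fox-Wright notation is purely mechanical and the theorem follows; the $m=0$ case should, as a sanity check, reduce back to Theorem \ref{thm-a}.
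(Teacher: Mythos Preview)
Your overall strategy---start from a contiguous extension of Whipple's identity carrying an integer shift $m$, terminate at $a=-n$, substitute, invert via Lemma~\ref{lemm-a}, and rewrite in Fox--Wright form---is exactly the paper's route (the paper takes the contiguous relation from Lemma~1 of Chu). However, your proposed inversion parameters contain an error that would actually block the argument.

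With the substitutions $b\to 1+2a-\lambda-\tfrac{m}{2}$ and $c\to 2+2a+2\lambda n+n$, the lower parameter $1+2b-c+m$ of the shifted $_3F_2$ becomes $1+2a-2\lambda-2\lambda n-n$: the two occurrences of $m$ cancel. Hence the correct triple is $x=1+2a$, $y=2a-2\lambda$, $z=1+2\lambda$, \emph{identical} to the Theorem~\ref{thm-a} case. Your choice $y=2a-2\lambda-m$ would force $z^{-1}(x-y)=(1+m)/(1+2\lambda)\neq 1$, and then the factor $(z^{-1}(x-y)+k)_n$ in \eqref{inver-a} no longer matches the $(1+k)_n$ supplied by the top parameter $1+n$, so the series simply fails to fit the inversion template. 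All of the $m$-dependence must instead be parked in $g(k)$ (through the single Pochhammer $(1+2a-\lambda-\tfrac{m}{2})_k$) and in the complicated closed form $f(n)$, which is where the finite $i$-sum lives. Consequently, after applying \eqref{inver-b} you obtain a \emph{double} sum over $k$ and $i$; the theorem emerges only after interchanging the order of summation---a step you did not flag. Note also that the Fox--Wright entries with step $2+4\lambda$ come from factors of the type $(1+2\lambda+(2+4\lambda)k)_m$ sitting inside $f(k)$, not from splitting $(y-zk)_n$ via duplication as you anticipated.
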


Remark: In the symbol $\Omega(\lambda,a,i,m,n)$, the parameters on
the first two lines are numerators and other ones are denominators.
This remark is also applicative to Theorem \ref{thm-c}.

\begin{proof}
Lemma 1 of \cite{chu-c} gives
 \bnm
&&\xxqdn{_3F_2}\ffnk{ccccccc}{1}{a,1-a,b}{c,1+2b-c+m}=\frac{(1+2b-c)_m}{(2+2b-2c)_m}
\sum_{i=0}^m(-1)^i\frac{(-m)_i(\frac{3}{2}+b-c)_i}{i!(\frac{1}{2}+b-c)_i}
\\
&&\xxqdn\:\:\times\:\:\frac{(1-c)_i(1+2b-2c)_i}{(1+2b-c)_i(2+2b-2c+m)_i}
{_3F_2}\ffnk{ccccccc}{1}{a,1-a,b}{c-i,1+2b-c+i}. \enm Calculating
the $_3F_2$-series on the right hand side by \eqref{whipple} and
then setting $a=-n$ in
 the resulting identity, we have
\bnm
&&\xxqdn{_3F_2}\ffnk{ccccccc}{1}{-n,1+n,b}{c,1+2b-c+m}=\frac{(1+2b-c)_m}{(2+2b-2c)_m}
\sum_{i=0}^m(-1)^i\frac{(-m)_i(\frac{3}{2}+b-c)_i}{i!(\frac{1}{2}+b-c)_i}
\\
&&\xxqdn\:\:\times\:\:\frac{(1-c)_i(1+2b-2c)_i}{(1+2b-c)_i(2+2b-2c+m)_i}
\frac{(\frac{c-n-i}{2})_n(c-2b-i)_n}{(\frac{c-n-i}{2}-b)_n(c-i)_n}.
\enm
 Employ the substitutions $b\to 1+2a-\lambda-\frac{m}{2}$ and
$c\to 2+2a+2\lambda n+n$ to gain
 \bnm
&&\xxqdn\sum_{k=0}^n(-1)^k\:\binm{n}{k}
 \frac{1+2a+2\lam k+k+k}{(1+2a+2\lam n+n)_{1+k}}
 \frac{2a-2\lam-2\lam k-k+k}{(2a-2\lam-2\lambda n-n)_{1+k}}(1+k)_n\\
 &&\xxqdn\:\:\times\:
  \frac{(1)_k(1+2a-\lam-\frac{m}{2})_k}{(1+2a+2\lam k+2k)(2a-2\lam-2\lam k)}\\
&&\xxqdn \:\:=\:\frac{(1)_n}{(1+2a+2\lambda n+n)(2a-2\lam-2\lambda
n-n)}
 \frac{(2\lam-2a+2\lambda n+n)_m}{(1+2\lambda+4\lam n+2n)_m}\\
&&\xxqdn\:\:\times\:\sum_{i=0}^m(-1)^i\frac{(-m)_i(\frac{1-m}{2}-\lambda-2\lambda
n-n)_i(-1-2\lambda-4\lambda n-2n-m)_i}
{i!(-\frac{1+m}{2}-\lambda-2\lambda n-n)_i(1+2a-2\lambda-2\lambda n-n-m)_i}\\
&&\xxqdn\:\:\times\:\frac{(-1-2a-2\lambda n-n)_i}
{(-2\lambda-4\lambda n-2n)_i} \frac{(a+\lambda
n+\frac{2-i}{2})_n(2\lambda-2a+2\lambda
n+n+m-i)_n}{(\lambda-a+\lambda n+\frac{m-i}{2})_n(2+2a+2\lambda n
+n-i)_n},
 \enm
which suits to \eqref{inver-a} with
 \bnm
&&\qdn\xqdn x=1+2a,\quad y=2a-2\lam, \quad z=1+2\lam; \\
&&\qdn\xqdn g(k)=\frac{(1)_k(1+2a-\lam-\frac{m}{2})_k}{(1+2a+2\lam
k+2k)(2a-2\lam-2\lam k)};\\
&&\qdn\xqdn f(n)=\frac{(1)_n}{(1+2a+2\lambda n+n)(2a-2\lam-2\lambda
n-n)}
 \frac{(2\lam-2a+2\lambda n+n)_m}{(1+2\lambda+4\lam n+2n)_m}\\
&&\:\times\:\sum_{i=0}^m(-1)^i\frac{(-m)_i(\frac{1-m}{2}-\lambda-2\lambda
n-n)_i(-1-2\lambda-4\lambda n-2n-m)_i}
{i!(-\frac{1+m}{2}-\lambda-2\lambda n-n)_i(1+2a-2\lambda-2\lambda n-n-m)_i}\\
&&\:\times\:\frac{(-1-2a-2\lambda n-n)_i} {(-2\lambda-4\lambda
n-2n)_i} \frac{(a+\lambda n+\frac{2-i}{2})_n(2\lambda-2a+2\lambda
n+n+m-i)_n}{(\lambda-a+\lambda n+\frac{m-i}{2})_n(2+2a+2\lambda n
+n-i)_n}.
 \enm
Then \eqref{inver-b} offers the dual relation
 \bnm
&&\qdn\sum_{k=0}^n(-1)^k\binm{n}{k}(1+2a+2\lam k+k)_n(2a-2\lam-2\lambda k-k)_n\frac{1+2k}{(1+n)_{1+k}}\\
 &&\qdn\:\times\:\,
 \frac{(1)_k}{(1+2a+2\lambda k+k)(2a-2\lam-2\lambda k-k)}
 \frac{(2\lam-2a+2\lambda k+k)_m}{(1+2\lambda+4\lam k+2k)_m}\\
&&\qdn\:\times\:\sum_{i=0}^m(-1)^i\frac{(-m)_i(\frac{1-m}{2}-\lambda-2\lambda
k-k)_i(-1-2\lambda-4\lambda k-2k-m)_i}
{i!(-\frac{1+m}{2}-\lambda-2\lambda k-k)_i(1+2a-2\lambda-2\lambda k-k-m)_i}\\
&&\qdn\:\times\:\frac{(-1-2a-2\lambda k-k)_i} {(-2\lambda-4\lambda
k-2k)_i}
\frac{(a+\lambda k+\frac{2-i}{2})_k(2\lambda-2a+2\lambda k+k+m-i)_k}{(\lambda-a+\lambda k+\frac{m-i}{2})_k(2+2a+2\lambda k+k-i)_k}\\
&&\qdn\:=\:\frac{(1)_n(1+2a-\lam-\frac{m}{2})_n}{(1+2a+2\lam
n+2n)(2a-2\lam-2\lam n)}.
 \enm
Interchanging the summation order, we achieve Theorem \ref{thm-b}
after some simplifications.
\end{proof}

When $m=0$, Theorem \ref{thm-b} reduces to Theorem \ref{thm-a}.
Performing the the replacements $\lambda\to \lambda-\frac{1}{2}$ and
$a\to\frac{\lambda+a-1}{2}$
 in the case $m=1$ of Theorem \ref{thm-b}, we attain the following
reciprocal formula after some reformulations.

\begin{corl}\label{corl-a}
Let $a$ and $\lambda$ be complex numbers. Then
 \bnm\qquad
A(\lambda;a,n)-A(-\lambda;a,n)=\frac{(-1)^{n+1}2^{2a+1}\,\lambda\,(a)_n}{(a+\lambda+2\lambda
n+n)(a-\lambda-2\lambda n+n)(1)_n},
 \enm
where the symbol on the left hand side stands for
 \bnm
&&\xqdn A(\lambda;a,n)\\
&&\xqdn={_4\Psi_5}\ffnk{ccccccc}{-1}
  {(1;1),(\frac{1+\lambda-a}{2};\lambda-\frac{1}{2}),(\frac{2+\lambda-a}{2};\lambda+\frac{1}{2}),(\lambda+a+n;2\lambda)}
{(1\!+\!n;-1),(2\!+\!n;1),(\frac{1+\lambda+a}{2};\lambda\!-\!\frac{1}{2}),(\frac{2+\lambda+a}{2};\lambda\!+\!\frac{1}{2}),(1\!+\!\lambda\!-\!a\!-\!n;2\lambda)}.
\enm
\end{corl}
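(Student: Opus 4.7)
The plan is to specialize Theorem \ref{thm-b} at $m=1$ and then perform the parameter substitution $\lambda\mapsto\lambda-\tfrac12$, $a\mapsto\tfrac{\lambda+a-1}{2}$. At $m=1$, the outer sum on the left of Theorem \ref{thm-b} collapses to just two terms, since the coefficient $(-1)^i(-1)_i/i!$ equals $1$ for both $i=0$ and $i=1$. This produces the preliminary identity
\[
\Omega(\lambda,a,0,1,n)+\Omega(\lambda,a,1,1,n)
 = \frac{(-1)^n\,2^{4a-2\lambda-2}\,(\tfrac12+2a-\lambda)_n}{(1+2a+2\lambda n+2n)(\lambda-a+\lambda n)(1)_n}.
\]
A direct calculation shows that after the substitutions the right-hand side equals $\tfrac{1}{8\lambda}$ times the right-hand side of the corollary, with the sign exponent $n+1$ arising from the elementary identity $\lambda-a+2\lambda n-n=-(a-\lambda-2\lambda n+n)$.

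The main content of the proof is to identify, after substitution, $\Omega_0:=\Omega(\lambda-\tfrac12,\tfrac{\lambda+a-1}{2},0,1,n)$ with $\tfrac{1}{8\lambda}A(\lambda;a,n)$ and $\Omega_1:=\Omega(\lambda-\tfrac12,\tfrac{\lambda+a-1}{2},1,1,n)$ with $-\tfrac{1}{8\lambda}A(-\lambda;a,n)$. Writing out the seven numerator and eight denominator parameters of the $_7\Psi_8$ that defines $\Omega_0$ and performing the substitutions, four parameter pairs already coincide with those of $A(\lambda;a,n)$; the three remaining \emph{excess} pairs $(3/2;1)/(1/2;1)$, $(3/2+\lambda;2\lambda)/(1/2+\lambda;2\lambda)$, and $(2\lambda;4\lambda)/(2+2\lambda;4\lambda)$ reduce via $\Gamma(x+1)=x\Gamma(x)$ to linear-in-$k$ factors whose product telescopes to the $k$-independent constant $\tfrac{1}{8\lambda}$. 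The analogous calculation for $\Omega_1$ yields the same constant $\tfrac{1}{8\lambda}$, leaving behind a $_4\Psi_5$ series $B(\lambda;a,n)$ whose parameters are shifted by $\pm\tfrac12$ relative to those of $A(\lambda;a,n)$.

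The main obstacle is then verifying $B(\lambda;a,n)=-A(-\lambda;a,n)$ termwise in $k$. My approach is to apply the gamma reflection formula $\Gamma(z)\Gamma(1-z)=\pi/\sin(\pi z)$ to each of the six non-trivial Gamma factors of $A(-\lambda;a,n)$ at index $k$; this converts them exactly into the Gamma factors of $B$ at index $k$ and leaves a residual product of six sine factors. Using $\sin(\pi/2-x)=\cos x$, $\sin(\pi-x)=\sin x$, and $\sin(\pi n+\theta)=(-1)^n\sin\theta$ for integers $n,k$, this sine product simplifies to $-\sin(\pi(\lambda-a+(2\lambda+1)k))\sin(\pi(\lambda+a+2\lambda k))/[\sin(\pi(\lambda+a+(2\lambda+1)k))\sin(\pi(\lambda-a+2\lambda k))]$. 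Expanding via the sine addition formula shows that numerator and denominator differ by a term proportional to $\sin(\pi k)=0$, so the quotient is the $k$-independent constant $-1$, as required. Summing $\Omega_0+\Omega_1$ and multiplying by $8\lambda$ now produces the identity stated in the corollary.
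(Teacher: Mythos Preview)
Your proof is correct and follows exactly the approach the paper indicates: specialize Theorem~\ref{thm-b} at $m=1$, perform the substitutions $\lambda\mapsto\lambda-\tfrac12$, $a\mapsto\tfrac{\lambda+a-1}{2}$, and simplify. The paper itself records only that one obtains the corollary ``after some reformulations'', so your write-up supplies the details that the paper omits---in particular the verification that the three excess Gamma pairs in each $\Omega_i$ collapse to the constant $1/(8\lambda)$, and the reflection-formula argument identifying the $i=1$ term with $-A(-\lambda;a,n)$; the latter step is correct, since the residual sine ratio reduces (via $\sin(\theta+\pi k)=(-1)^k\sin\theta$) to $-1$ for every integer $k$.
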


Two hypergeometric series identities from Corollary \ref{corl-a} can
be laid out as follows.

\begin{exam}[$\lambda=\frac{1}{2}$ in Corollary \ref{corl-a}]
 \bnm
&&\xxqdn\frac{1}{1+2a}\:{_4F_3}\ffnk{ccccccccc}{1}{1,\frac{5-2a}{4},\frac{1}{2}+a+n,-n}
{\frac{5+2a}{4},\frac{3}{2}-a-n,2+n}\\
&&\xxqdn+\:\frac{1}{1-2a}\:{_4F_3}\ffnk{ccccccccc}{1}{1,\frac{3-2a}{4},\frac{1}{2}+a+n,-n}
{\frac{3+2a}{4},\frac{3}{2}-a-n,2+n}\\
  &&\xxqdn=\frac{2}{(1-2a)(1+2a+4n)}\frac{(2)_n(a)_n}{(\frac{1}{2}+a)_n(-\frac{1}{2}+a)_n}.
 \enm
\end{exam}

\begin{exam}[$\lambda=\frac{3}{2}$ in Corollary \ref{corl-a}]
 \bnm
&&\xxqdn\frac{1}{3+2a}\:
{_8F_7}\ffnk{ccccccccc}{1}{1,\frac{5-2a}{4},\frac{7-2a}{8},\frac{11-2a}{8},\frac{3+2a+2n}{6},\frac{5+2a+2n}{6},\frac{7+2a+2n}{6},-n}
{\frac{5+2a}{4},\frac{7+2a}{8},\frac{11+2a}{8},\frac{9-2a-2n}{6},\frac{7-2a-2n}{6},\frac{5-2a-2n}{6},2+n}\\
&&\xxqdn+\:\frac{1}{3-2a}\:
{_8F_7}\ffnk{ccccccccc}{1}{1,\frac{3-2a}{4},\frac{5-2a}{8},\frac{9-2a}{8},\frac{3+2a+2n}{6},\frac{5+2a+2n}{6},\frac{7+2a+2n}{6},-n}
{\frac{3+2a}{4},\frac{5+2a}{8},\frac{9+2a}{8},\frac{9-2a-2n}{6},\frac{7-2a-2n}{6},\frac{5-2a-2n}{6},2+n}\\
  &&\xxqdn=\frac{6}{(3-2a+4n)(3+2a+8n)}\frac{(2)_n(a)_n}{(\frac{3}{2}+a)_n(-\frac{3}{2}+a)_n}.
 \enm
\end{exam}

\begin{thm}\label{thm-c}
Let $a$ and $\lambda$ be complex numbers. Then
 \bnm
\:\:\sum_{i=0}^m\frac{(-m)_i}{i!}\Theta(\lambda,a,i,m,n)=
 \frac{(-1)^{n}2^{4a-2\lambda-m-1}(1+2a-\lambda+\frac{m}{2})_n(\lambda-2a-\frac{m}{2})_m}
 {(1+2a+2\lambda n+2n)(\lambda-a+\lambda n)(1)_n},
 \enm
where the expression on the left hand side is
 \bnm
&&\xxqdn\Theta(\lambda,a,i,m,n)\\
 &&\xxqdn\:={_8\Psi_9}\ffnk{ccccccc}{-1}
  {(1,\frac{3}{2};1),(\frac{m-i}{2}+\lambda-a;\lambda),(\frac{1+m-i}{2}+\lambda-a;1+\lambda),\\
(\frac{3+m}{2}-i+\lambda,\frac{2+m}{2}+\lambda,1+2a+n;1+2\lambda),(1-i+2\lambda;2+4\lambda)}
{(1+n;-1),(2+n,\frac{1}{2};1),(\frac{2-i}{2}+a;\lambda),(\frac{3-i}{2}+a;1+\lambda),\\
(\frac{1+m}{2}\!-\!i\!+\!\lambda,\frac{2-m}{2}\!+\!\lambda,1\!+\!2\lambda\!-\!2a\!-\!n;1\!+\!2\lambda),(2+m-i+2\lambda;2\!+\!4\lambda)}.
\enm
\end{thm}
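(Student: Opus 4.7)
The plan is to mirror the strategy used for Theorem \ref{thm-b}, but starting from a different $m$-indexed extension of Whipple's $_3F_2$-identity than the one of Chu used there. Specifically, I would look for (or derive) a companion extension in \cite{chu-c} that represents
\[
{_3F_2}\ffnk{cccc}{1}{a,1-a,b}{c+m,1+2b-c+m}
\]
as a finite sum over $i=0,1,\dots,m$ of $_3F_2$-series of Whipple-type at shifted parameters $(c-i, 1+2b-c+i)$, weighted by a ratio of Pochhammer symbols whose closed-form pieces will, after evaluation via \eqref{whipple}, manifest the extra factor $(\lambda-2a-\frac{m}{2})_m$ on the right and the extra Fox-Wright parameters $(\frac{2+m}{2}+\lambda;1+2\lambda)$ and $(\frac{2-m}{2}+\lambda;1+2\lambda)$ that distinguish $\Theta$ from $\Omega$ in Theorem \ref{thm-b}.

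Once that extension is in hand, first specialize $a\to -n$ so that the left-hand $_3F_2$ becomes a balanced terminating series; then evaluate each inner Whipple series on the right via the $a=-n$ case of \eqref{whipple} (the same closed form $(\frac{c-n-i}{2})_n(c-2b-i)_n/((\frac{c-n-i}{2}-b)_n(c-i)_n)$ appeared in the proof of Theorem \ref{thm-b}). Next, perform the key substitutions
\[
b\ \longrightarrow\ 1+2a-\lambda+\tfrac{m}{2},\qquad c\ \longrightarrow\ 2+2a+2\lambda n+n,
\]
noting the sign change in the $m/2$-shift of $b$ compared with Theorem \ref{thm-b}; this is precisely what turns $(1+2a-\lambda-\tfrac{m}{2})_n$ into $(1+2a-\lambda+\tfrac{m}{2})_n$ on the right and produces the additional $(\lambda-2a-\tfrac{m}{2})_m$ factor from the new closed form.

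At this stage the identity should match the form of \eqref{inver-a} in Lemma~\ref{lemm-a} with the same parameters
\[
x=1+2a,\qquad y=2a-2\lambda,\qquad z=1+2\lambda
\]
as in the proof of Theorem \ref{thm-b}, but with a new pair $(f,g)$: $g(k)$ acquires a factor $(1+2a-\lambda+\frac{m}{2})_k$ (instead of $-\frac{m}{2}$), while $f(n)$ acquires both the $(\lambda-2a-\frac{m}{2})_m$ prefactor and two additional Pochhammer factors in its inner $i$-sum corresponding to the extra $(\frac{2\pm m}{2}+\lambda;1+2\lambda)$ parameters. Then I apply the dual \eqref{inver-b} to read off the summation formula for $g$ in terms of a double sum over $k$ and $i$, swap the order of summation so that the outer index becomes $i$, and repackage the resulting $k$-sum in Fox-Wright notation using the convention $(x_1,\dots,x_r;A)=\prod\Gamma(x_i+Ak)$; this last step is exactly what was carried out at the end of Theorems \ref{thm-a} and \ref{thm-b}.

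The main obstacle will be selecting and verifying the correct extension of Whipple's identity and keeping all the shift indices consistent, since the additional parameters $(\frac{2+m}{2}+\lambda;1+2\lambda)$ and $(\frac{2-m}{2}+\lambda;1+2\lambda)$ must arise with matching $k$-coefficients $1+2\lambda$, which forces the extension to contribute a single extra ratio of the form $(\tfrac{2+m}{2}+\lambda+(1+2\lambda)k)/(\tfrac{2-m}{2}+\lambda+(1+2\lambda)k)$ inside the summand; getting this ratio to appear (and no spurious factors) is the delicate bookkeeping step. All remaining manipulations—simplification of Pochhammer quotients, the inversion, and rewriting in Fox-Wright form—are routine and parallel to the proof of Theorem \ref{thm-b}.
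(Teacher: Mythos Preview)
Your overall strategy---use a companion lemma from \cite{chu-c}, specialize to $a=-n$, substitute, apply Lemma~\ref{lemm-a}, swap sums---is exactly the paper's. But the specific extension you posit is the wrong one, and the error propagates. The paper uses Lemma~2 of \cite{chu-c}, which does \emph{not} shift both denominator parameters; it shifts the \emph{numerator} parameter, expressing
\[
{_3F_2}\ffnk{cccc}{1}{a,\,1-a,\,b+\tfrac{m}{2}}{c,\ 1+2b-c}
\]
as a finite $i$-sum of Whipple-type $_3F_2$'s at $(c-i,\,1+2b-c-m+i)$, with prefactor $\dfrac{(c-2b)_m(c-b-\tfrac{m}{2})_m}{(2c-2b-1)_m(1-b-\tfrac{m}{2})_m}$. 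The paper then substitutes $b\to 1+2a-\lambda$ (\emph{without} the $+\tfrac{m}{2}$) and $c\to 2+2a+2\lambda n+n$; the $+\tfrac{m}{2}$ comes from the lemma itself, so the left-hand numerator becomes $1+2a-\lambda+\tfrac{m}{2}$ while the two denominators become exactly $2+2a+2\lambda n+n$ and $1+2a-2\lambda-2\lambda n-n$, matching the inversion kernel for $x=1+2a$, $y=2a-2\lambda$, $z=1+2\lambda$. The extra factor $(\lambda-2a-\tfrac{m}{2})_m$ and the new Fox--Wright pair $(\tfrac{2\pm m}{2}+\lambda;1+2\lambda)$ arise precisely from the prefactor piece $(c-b-\tfrac{m}{2})_m/(1-b-\tfrac{m}{2})_m$, which is specific to this numerator-shift lemma.

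By contrast, your form ${_3F_2}\bigl[a,1-a,b;\,c+m,\,1+2b-c+m\bigr]$ together with $b\to1+2a-\lambda+\tfrac{m}{2}$ and $c\to2+2a+2\lambda n+n$ yields denominators $2+2a+2\lambda n+n+m$ and $1+2a-2\lambda-2\lambda n-n+2m$; the two offsets $+m$ and $+2m$ cannot be removed simultaneously by any translation of $c$, so the expression never fits \eqref{inver-a} with the stated $x,y,z$. Worse, after relabelling $\tilde c=c+m$ your left-hand side is ${_3F_2}\bigl[a,1-a,b;\,\tilde c,\,1+2b-\tilde c+2m\bigr]$, which is exactly the Lemma~1 form used for Theorem~\ref{thm-b} with $m$ replaced by $2m$; running your outline from there would reproduce (a reparametrization of) Theorem~\ref{thm-b}, not Theorem~\ref{thm-c}. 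The fix is simply to start from the correct Lemma~2 of \cite{chu-c} and keep the substitution $b\to 1+2a-\lambda$ unchanged; everything downstream then goes through as you describe.
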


\begin{proof}
Utilizing Lemma 2 of \cite{chu-c}, it is not difficult to obtain
 \bnm
&&\xqdn{_3F_2}\ffnk{ccccccc}{1}{a,1-a,b+\frac{m}{2}}{c,1+2b-c}=\frac{(c-2b)_m(c-b-\frac{m}{2})_m}{(2c-2b-1)_m(1-b-\frac{m}{2})_m}
\!\sum_{i=0}^m\frac{(-m)_i(\frac{3-m}{2}+b-c)_i}{i!(\frac{1-m}{2}+b-c)_i}\\
&&\xqdn\:\:\times\:\:\frac{(1-c)_i(1+2b-2c-m)_i}{(2+2b-2c)_i(1+2b-c-m)_i}
{_3F_2}\ffnk{ccccccc}{1}{a,1-a,b-\frac{m}{2}}{c-i,1+2b-c-m+i}.
  \enm
Evaluating the $_3F_2$-series on the right hand side by
\eqref{whipple} and then taking $a=-n$ in
 the resulting identity, we have
\bnm
&&\xqdn{_3F_2}\ffnk{ccccccc}{1}{-n,1+n,b+\frac{m}{2}}{c,1+2b-c}=\frac{(c-2b)_m(c-b-\frac{m}{2})_m}{(2c-2b-1)_m(1-b-\frac{m}{2})_m}
\!\sum_{i=0}^m\frac{(-m)_i(\frac{3-m}{2}+b-c)_i}{i!(\frac{1-m}{2}+b-c)_i}\\
&&\xqdn\:\:\times\:\:
\frac{(1-c)_i(1+2b-2c-m)_i}{(2+2b-2c)_i(1+2b-c-m)_i}
\frac{(\frac{c-n-i}{2})_n(c-2b+m-i)_n}{(\frac{c-n+m-i}{2}-b)_n(c-i)_n}.
\enm
 Employ the substitutions $b\to 1+2a-\lambda$ and $c\to
2+2a+2\lambda n+n$ to get
  \bnm
&&\xxqdn\sum_{k=0}^n(-1)^k\:\binm{n}{k}
 \frac{1+2a+2\lam k+k+k}{(1+2a+2\lam n+n)_{1+k}}
 \frac{2a-2\lam-2\lam k-k+k}{(2a-2\lam-2\lambda n-n)_{1+k}}(1+k)_n\\
 &&\xxqdn\:\:\times\:
  \frac{(1)_k(1+2a-\lam+\frac{m}{2})_k}{(1+2a+2\lam k+2k)(2a-2\lam-2\lam k)}\\
&&\xxqdn \:\:=\:\frac{(1)_n}{\sst(1+2a+2\lambda
n+n)(2a-2\lam-2\lambda n-n)}
 \frac{(2\lam-2a+2\lambda n+n)_m(1+\lambda+2\lambda n+n-\frac{m}{2})_m}{(1+2\lambda+4\lam n+2n)_m(\lambda-2a-\frac{m}{2})_m}\\
&&\xxqdn\:\:\times\:\sum_{i=0}^m\frac{(-m)_i(\frac{1-m}{2}-\lambda-2\lambda
n-n)_i(-1-2\lambda-4\lambda n-2n-m)_i}
{i!(-\frac{1+m}{2}-\lambda-2\lambda n-n)_i(1+2a-2\lambda-2\lambda n-n-m)_i}\\
&&\xxqdn\:\:\times\:\frac{(-1-2a-2\lambda n-n)_i}
{(-2\lambda-4\lambda n-2n)_i} \frac{(a+\lambda
n+\frac{2-i}{2})_n(2\lambda-2a+2\lambda
n+n+m-i)_n}{(\lambda-a+\lambda n+\frac{m-i}{2})_n(2+2a+2\lambda n
+n-i)_n},
 \enm
which satisfies \eqref{inver-a} with
 \bnm
&&\qdn\xqdn x=1+2a,\quad y=2a-2\lam, \quad z=1+2\lam; \\
&&\qdn\xqdn g(k)=\frac{(1)_k(1+2a-\lam+\frac{m}{2})_k}{(1+2a+2\lam
k+2k)(2a-2\lam-2\lam k)};\\
&&\qdn\xqdn f(n)=\frac{(1)_n}{\sst(1+2a+2\lambda
n+n)(2a-2\lam-2\lambda n-n)}
 \frac{(2\lam-2a+2\lambda n+n)_m(1+\lambda+2\lambda n+n-\frac{m}{2})_m}{(1+2\lambda+4\lam n+2n)_m(\lambda-2a-\frac{m}{2})_m}\\
&&\:\times\:\sum_{i=0}^m\frac{(-m)_i(\frac{1-m}{2}-\lambda-2\lambda
n-n)_i(-1-2\lambda-4\lambda n-2n-m)_i}
{i!(-\frac{1+m}{2}-\lambda-2\lambda n-n)_i(1+2a-2\lambda-2\lambda n-n-m)_i}\\
&&\:\times\:\frac{(-1-2a-2\lambda n-n)_i} {(-2\lambda-4\lambda
n-2n)_i} \frac{(a+\lambda n+\frac{2-i}{2})_n(2\lambda-2a+2\lambda
n+n+m-i)_n}{(\lambda-a+\lambda n+\frac{m-i}{2})_n(2+2a+2\lambda n
+n-i)_n}.
 \enm
Then \eqref{inver-b} produces the dual relation
 \bnm
&&\qdn\sum_{k=0}^n(-1)^k\binm{n}{k}(1+2a+2\lam k+k)_n(2a-2\lam-2\lambda k-k)_n\frac{1+2k}{(1+n)_{1+k}}\\
 &&\qdn\:\times\:\,
 \frac{(1)_k}{\sst(1+2a+2\lambda k+k)(2a-2\lam-2\lambda k-k)}
 \frac{(2\lam-2a+2\lambda k+k)_m(1+\lambda+2\lambda k+k-\frac{m}{2})_m}{(1+2\lambda+4\lam k+2k)_m(\lambda-2a-\frac{m}{2})_m}\\
&&\qdn\:\times\:\sum_{i=0}^m\frac{(-m)_i(\frac{1-m}{2}-\lambda-2\lambda
k-k)_i(-1-2\lambda-4\lambda k-2k-m)_i}
{i!(-\frac{1+m}{2}-\lambda-2\lambda k-k)_i(1+2a-2\lambda-2\lambda k-k-m)_i}\\
&&\qdn\:\times\:\frac{(-1-2a-2\lambda k-k)_i}{(-2\lambda-4\lambda
k-2k)_i}
\frac{(a+\lambda k+\frac{2-i}{2})_k(2\lambda-2a+2\lambda k+k+m-i)_k}{(\lambda-a+\lambda k+\frac{m-i}{2})_k(2+2a+2\lambda k+k-i)_k}\\
&&\qdn\:=\:\frac{(1)_n(1+2a-\lam+\frac{m}{2})_n}{(1+2a+2\lam
n+2n)(2a-2\lam-2\lam n)}.
 \enm
Interchanging the summation order, we gain Theorem \ref{thm-c} after
some simplifications.
\end{proof}

When $m=0$, Theorem \ref{thm-c} also reduces to Theorem \ref{thm-a}.
Performing the the replacements $\lambda\to \lambda-\frac{1}{2}$ and
$a\to\frac{\lambda+a-1}{2}$
 in the case $m=1$ of Theorem \ref{thm-c}, we achieve the following
reciprocal formula after some reformulations.

\begin{corl}\label{corl-b}
Let $a$ and $\lambda$ be complex numbers. Then
 \bnm
B(\lambda;a,n)+B(-\lambda;a,n)=\frac{(-1)^{n}4^{a}(a)_{1+n}}{(a+\lambda+2\lambda
n+n)(a-\lambda-2\lambda n+n)(1)_n},
 \enm
where the symbol on the left hand side stands for
 \bnm
&&\xqdn B(\lambda;a,n)\\
&&\xqdn={_5\Psi_6}\ffnk{ccccccc}{-1}
  {(1,\frac{3}{2};1),(\frac{1+\lambda-a}{2};\lambda-\frac{1}{2}),(\frac{2+\lambda-a}{2};\lambda+\frac{1}{2}),(\lambda+a+n;2\lambda)}
{(1\!+\!n;-1),(2\!+\!n,\frac{1}{2};1),(\frac{1+\lambda+a}{2};\lambda\!-\!\frac{1}{2}),(\frac{2+\lambda+a}{2};\lambda\!+\!\frac{1}{2}),(1\!+\!\lambda\!-\!a\!-\!n;2\lambda)}.
\enm
\end{corl}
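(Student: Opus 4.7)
The plan is to specialize Theorem~\ref{thm-c} to $m=1$ and then apply the substitutions $\lambda \mapsto \lambda - \tfrac{1}{2}$ and $a \mapsto \tfrac{\lambda+a-1}{2}$, as suggested in the paragraph preceding the statement. With $m=1$, the factor $(-m)_i/i!$ vanishes for $i\geq 2$ and equals $1, -1$ at $i=0,1$, so the outer sum on the left-hand side collapses to $\Theta_0 - \Theta_1$, where $\Theta_i := \Theta(\lambda,a,i,1,n)$. The right-hand side of Theorem~\ref{thm-c} reduces to
\[
\frac{(-1)^n\, 2^{4a-2\lambda-2}\, (\tfrac{3}{2}+2a-\lambda)_n\, (\lambda-2a-\tfrac{1}{2})}{(1+2a+2\lambda n+2n)(\lambda-a+\lambda n)(1)_n},
\]
and routine bookkeeping (using $(a)_{n+1} = a(a+1)_n$ and $2^{2a-2}=4^a/4$) verifies that, after the substitutions, this becomes exactly one quarter of the right-hand side claimed for Corollary~\ref{corl-b}. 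It therefore suffices to establish, after the substitutions, the two identifications $\Theta_0 = \tfrac{1}{4}\,B(\lambda; a, n)$ and $\Theta_1 = -\tfrac{1}{4}\,B(-\lambda; a, n)$; then $\Theta_0 - \Theta_1 = \tfrac{1}{4}\bigl(B(\lambda; a, n) + B(-\lambda; a, n)\bigr)$.

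For the first identification, the six $a$-dependent blocks of $\Theta_0$ coincide block-by-block with the corresponding blocks of $B(\lambda; a, n)$ after the substitutions, leaving three extra numerator pairs $(\tfrac{3}{2}+\lambda;\,2\lambda),\ (1+\lambda;\,2\lambda),\ (2\lambda;\,4\lambda)$ against three extra denominator pairs $(\tfrac{1}{2}+\lambda;\,2\lambda),\ (\lambda;\,2\lambda),\ (2+2\lambda;\,4\lambda)$. These collapse via $\Gamma(x+1) = x\Gamma(x)$ to the $k$-independent value
\[
\frac{(\tfrac{1}{2}+\lambda+2\lambda k)(\lambda+2\lambda k)}{(2\lambda+4\lambda k)(1+2\lambda+4\lambda k)}=\tfrac{1}{4}.
\]

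The second identification is the main obstacle, because the Fox--Wright coefficients $\lambda \pm \tfrac{1}{2}$ and $2\lambda$ occurring in $\Theta_1$ have the \emph{wrong} sign relative to $-\lambda \mp \tfrac{1}{2}$ and $-2\lambda$ in $B(-\lambda; a, n)$. The plan is to apply the reflection formula $\Gamma(z)\Gamma(1-z) = \pi/\sin(\pi z)$ three times in each of the numerator and the denominator of the ratio $\Theta_1/B(-\lambda;a,n)$; one checks directly that each of the six wrong-coefficient $\Gamma$-factors of $\Theta_1$ pairs with a complementary factor in $B(-\lambda; a, n)$ whose arguments sum to exactly $1$. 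The six leftover $\Gamma$-factors (with coefficients $2\lambda$ and $4\lambda$) reduce to $\tfrac{1}{4}$ by the same recurrence as above. The six leftover sines simplify as follows: $\sin\pi(\tfrac{1}{2}+x) = \cos\pi x$ and $\sin\pi(x+n) = (-1)^n\sin\pi x$ reduce the problem to applying the product-to-sum identity $2\sin A\cos B = \sin(A+B)+\sin(A-B)$ with $A-B = -\pi k$; since $\sin(\pi k) = 0$ at integer $k$, only the $\sin(A+B)$ terms survive, and they cancel pairwise to contribute the overall factor $-1$. Altogether $\Theta_1 = -\tfrac{1}{4}\,B(-\lambda; a, n)$, completing the proof.
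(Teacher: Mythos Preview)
Your proof is correct and follows exactly the approach indicated in the paper: specialize Theorem~\ref{thm-c} to $m=1$, perform the substitutions $\lambda\to\lambda-\tfrac12$, $a\to\tfrac{\lambda+a-1}{2}$, and simplify. The paper merely says ``after some reformulations''; you have supplied those reformulations in detail, and the verifications of the right-hand side, of $\Theta_0=\tfrac14 B(\lambda;a,n)$, and of $\Theta_1=-\tfrac14 B(-\lambda;a,n)$ via the reflection formula all check out.
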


Two hypergeometric series identities from Corollary \ref{corl-b} can
be displayed as follows.

\begin{exam}[$\lambda=\frac{1}{2}$ in Corollary \ref{corl-b}]
 \bnm
&&\xxqdn\frac{1}{1+2a}\:{_5F_4}\ffnk{ccccccccc}{1}{1,\frac{3}{2},\frac{5-2a}{4},\frac{1}{2}+a+n,-n}
{\frac{1}{2},\frac{5+2a}{4},\frac{3}{2}-a-n,2+n}\\
&&\xxqdn-\:\frac{1}{1-2a}\:{_5F_4}\ffnk{ccccccccc}{1}{1,\frac{3}{2},\frac{3-2a}{4},\frac{1}{2}+a+n,-n}
{\frac{1}{2},\frac{3+2a}{4},\frac{3}{2}-a-n,2+n}\\
  &&\xxqdn=\frac{4}{(2a-1)(1+2a+4n)}\frac{(1)_{1+n}(a)_{1+n}}{(\frac{1}{2}+a)_n(-\frac{1}{2}+a)_n}.
 \enm
\end{exam}

\begin{exam}[$\lambda=\frac{3}{2}$ in Corollary \ref{corl-b}]
 \bnm
&&\xxqdn\frac{1}{3+2a}\:
{_9F_8}\ffnk{ccccccccc}{1}{1,\frac{3}{2},\frac{5-2a}{4},\frac{7-2a}{8},\frac{11-2a}{8},\frac{3+2a+2n}{6},\frac{5+2a+2n}{6},\frac{7+2a+2n}{6},-n}
{\frac{1}{2},\frac{5+2a}{4},\frac{7+2a}{8},\frac{11+2a}{8},\frac{9-2a-2n}{6},\frac{7-2a-2n}{6},\frac{5-2a-2n}{6},2+n}\\
&&\xxqdn-\:\frac{1}{3-2a}\:
{_9F_8}\ffnk{ccccccccc}{1}{1,\frac{3}{2},\frac{3-2a}{4},\frac{5-2a}{8},\frac{9-2a}{8},\frac{3+2a+2n}{6},\frac{5+2a+2n}{6},\frac{7+2a+2n}{6},-n}
{\frac{1}{2},\frac{3+2a}{4},\frac{5+2a}{8},\frac{9+2a}{8},\frac{9-2a-2n}{6},\frac{7-2a-2n}{6},\frac{5-2a-2n}{6},2+n}\\
  &&\xxqdn=\frac{4}{(2a-3-4n)(3+2a+8n)}\frac{(1)_{1+n}(a)_{1+n}}{(\frac{3}{2}+a)_n(-\frac{3}{2}+a)_n}.
 \enm
\end{exam}
%%%%%%%%%%%%%%%%%%%%%%%%%%%%%%%%%%%%%%%%%%%%%%%%%%%%%%%%%%%%%%%%%%%
%%%%%%%%%%%%%%%%%%%%%%%%%%%%%%%%%%%%%%%%%%%%%%%%%%%%%%%%%%%%%%%%%%%
%%%%%%%%%%%%%%%%%%%%%%%%%%%%%%%%%%%%%%%%%%%%%%%%%%%%%%%%%%%%%%%%%%%

%%%%%%%%%%%%%%%%%%%%%%%%%%%%%%%%%%%%%%%%%%%%%%%%%%%%%%
%%%%%%%%%%%%%%%%%%%%%%%%%%%%%%%%%%%%%%%%%%%%%%%%%%%%%%
\section{Two different summation formulas for Fox-Wright function}
%%%%%%%%%%%%%%%%%%%%%%%%%%%%%%%%%%%%%%%%%%%%%%%%%%%%%%
%%%%%%%%%%%%%%%%%%%%%%%%%%%%%%%%%%%%%%%%%%%%%%%%%%%%%%
\begin{thm}\label{thm-d}
Let $a$, $b$ and $c$ be complex numbers. Then
 \bnm
&&\xqdn{_6\Psi_7}\ffnk{ccccccc}{-1}
{(1,\frac{3}{2};1),(\frac{1}{2}-a+b,a+n;\frac{1}{2}),(b-a,-\frac{1}{2}+a+n;-\frac{1}{2})}
{(1\!+\!n;-1),(2+n,\frac{1}{2};1),(1\!+\!a\!-\!c,\frac{1}{2}\!-\!a\!+\!b\!+\!c;\frac{1}{2}),(\frac{1}{2}\!+\!a\!-\!c,b\!+\!c\!-\!a;-\frac{1}{2})}\\
 &&\xqdn\:\:=\:
\frac{2(\frac{1}{2}+2a-b-c)_n(b)_n(c)_n}{(1)_n(1+2a-2b)_n}\frac{\Gamma(2a-1+n)\Gamma(2b-2a)}{\Gamma(1+2a-2c+n)\Gamma(2b+2c-2a+n)}.
 \enm
\end{thm}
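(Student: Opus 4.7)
The plan is to follow the template used for Theorems \ref{thm-a}--\ref{thm-c}: start from a classical terminating hypergeometric identity, match its summand to the structure of \eqref{inver-a} in Lemma \ref{lemm-a}, and then read off the dual via \eqref{inver-b}. The resulting sum, once rewritten with Gamma functions in place of Pochhammer symbols, will be the claimed ${_6\Psi_7}$ identity.

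First I would identify the base identity. The right-hand side of Theorem \ref{thm-d} carries three independent parameters $a,b,c$ together with the Gamma-function factors $\Gamma(2a-1+n)$, $\Gamma(2b-2a)$, $\Gamma(1+2a-2c+n)$, $\Gamma(2b+2c-2a+n)$. This is the hallmark of a Whipple--Dougall type $_5F_4$ or $_7F_6$ summation in which $b$ and $c$ remain free while a further parameter is set to $-n$ to terminate the series; Dougall's formula \eqref{dougall} and its Whipple companions are the natural candidates. Since the Fox-Wright entries on the left use coefficients $A_i,B_j\in\{\pm\tfrac{1}{2},\pm 1\}$, the inversion parameter $z$ in Lemma \ref{lemm-a} must be taken as $z=\tfrac{1}{2}$, so that the shifted factorials $(x+zk)_n$ and $(y-zk)_n$ of \eqref{inver-b} produce precisely the $(\,\cdot\,;\pm\tfrac{1}{2})$-blocks. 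Matching the specific blocks $(\tfrac{1}{2}-a+b,\,a+n;\tfrac{1}{2})$ and $(b-a,\,-\tfrac{1}{2}+a+n;-\tfrac{1}{2})$ points to the choices $x=2a-1$, $y=2b-2a$, $z=\tfrac{1}{2}$ (or a close variant), whence $z^{-1}(x-y)=4a-2b-1$.

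With $(x,y,z)$ fixed, the next step is to rewrite the chosen base identity, specialized by letting its free parameter equal $-n$, so that its right-hand side displays the inversion kernel $(x+zk+k)/(x+zn)_{1+k}\cdot(y-zk+k)/(y-zn)_{1+k}\cdot(z^{-1}(x-y)+k)_n$ of \eqref{inver-a} multiplied by a simple factor $g(k)$. Reading off $g$ and $f$ from this presentation, I would then apply the dual \eqref{inver-b}. Since $f$ is elementary (a Pochhammer quotient), the dual sum becomes precisely the ${_6\Psi_7}$ on the left of Theorem \ref{thm-d}; its value equals $g(n)$, which after the routine conversion of Pochhammers to Gamma ratios matches the closed form on the right.

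The main obstacle is the first step: guessing the correct base identity and the precise linear substitutions for its parameters so that the terminating sum slots exactly into \eqref{inver-a}. Once this matching is set, what remains is bookkeeping: the $(1+n;-1)$ block encodes $1/(n-k)!$ from $\binom{n}{k}$ via reflection through $\Gamma(1+n-k)$, the $(1,\tfrac{3}{2};1)$ and $(2+n,\tfrac{1}{2};1)$ blocks absorb the very-well-poised $(1+2k)$ and $\Gamma(\tfrac{1}{2}+k)/\Gamma(\tfrac{3}{2}+k)$-type factors produced by the kernel, and the Gamma-function ratio on the right consolidates the constants surviving from both sides of \eqref{inver-b}. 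A secondary subtlety is verifying that the convergence hypotheses of the base identity are respected in the specialization; since the series terminates, these are automatic.
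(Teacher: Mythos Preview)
Your strategy matches the paper's method in outline: start from a known terminating hypergeometric summation, fit it to \eqref{inver-a}, and read off \eqref{inver-b}. You also correctly pin down $z=\tfrac{1}{2}$. However, two specific points in your proposal are off, and one of them is the crux of the argument.

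First, the inversion parameters. The paper takes $x=a$, $y=a-\tfrac{1}{2}$, $z=\tfrac{1}{2}$, so that $z^{-1}(x-y)=1$. This is not a cosmetic choice: with $z^{-1}(x-y)=1$ the kernel in \eqref{inver-a} supplies $(1+k)_n$, and the dual \eqref{inver-b} supplies the very-well-poised factor $(1+2k)/(1+n)_{1+k}$ that becomes the $(\tfrac{3}{2};1)$ and $(\tfrac{1}{2};1)$ blocks in the $_6\Psi_7$. Your suggested $x=2a-1$, $y=2b-2a$ (with the arithmetic slip in $z^{-1}(x-y)$ aside) makes $y$ depend on $b$; then $z^{-1}(x-y)$ is not $1$, the well-poised structure does not emerge, and the $b$-dependent entries $(\tfrac{1}{2}-a+b;\tfrac{1}{2})$, $(b-a;-\tfrac{1}{2})$ cannot be accounted for by the kernel $(x+zk)_n(y-zk)_n$. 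In the paper those $b$-entries come instead from the Pochhammer quotients inside $f(k)$, not from the inversion kernel.

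Second, and more importantly, the base identity is neither Dougall's $_5F_4$ \eqref{dougall} nor Whipple's $_3F_2$; it is a specific terminating $_7F_6$ summation due to Chu (\cite{chu-e}, Corollary~16):
\[
{_7F_6}\!\left[\begin{array}{c}a,\,1+\tfrac{a}{3},\,b,\,c,\,\tfrac{1}{2}+a-b-c,\,1+n,\,-n\\[1mm]
\tfrac{a}{3},\,1+a-2b,\,1+a-2c,\,2b+2c-a,\,\tfrac{1+a-n}{2},\,\tfrac{2+a+n}{2}\end{array}\,;1\right]
=\frac{(1+a)_n(2b-a)_n(\tfrac{1+a-n}{2}-c)_n(b+c-\tfrac{a+n}{2})_n}{(1+a-2c)_n(2b+2c-a)_n(\tfrac{1+a-n}{2})_n(b-\tfrac{a+n}{2})_n}.
\]
After the substitution $a\to 2a$, its left side matches \eqref{inver-a} with $x=a$, $y=a-\tfrac{1}{2}$, $z=\tfrac{1}{2}$ and
\[
g(k)=\frac{(1)_k(2a-1)_k(b)_k(c)_k(\tfrac{1}{2}+2a-b-c)_k}{(1+2a-2b)_k(1+2a-2c)_k(2b+2c-2a)_k},
\]
while its right side gives $f(n)$. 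The dual \eqref{inver-b} then evaluates to $g(n)$, which is exactly the closed form in Theorem~\ref{thm-d} after converting Pochhammers to Gamma ratios. Without this $_7F_6$ input the scheme cannot be completed; identifying it is the substantive step you left open.
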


\begin{proof}
A $_7F_6$-series identity due to \cite{chu-e}[Corollary 16] can be
expressed as
 \bnm
&&\xxqdn{_7F_6}\ffnk{ccccccccc}{1}{a,1+\frac{a}{3},b,c,\frac{1}{2}+a-b-c,1+n,-n}
{\frac{a}{3},1+a-2b,1+a-2c,2b+2c-a,\frac{1+a-n}{2},\frac{2+a+n}{2}}\\
&&\xxqdn\:\:=\:\frac{(1+a)_n(2b-a)_n(\frac{1+a-n}{2}-c)_n(b+c-\frac{a+n}{2})_n}{(1+a-2c)_n(2b+2c-a)_n(\frac{1+a-n}{2})_n(b-\frac{a+n}{2})_n}.
\enm
 Employ the substitution $a\to 2a$ to attain
 \bnm
&&\sum_{k=0}^n(-1)^k\:\binm{n}{k}
 \frac{a+\frac{k}{2}+k}{(a+\frac{n}{2})_{1+k}}
  \frac{a-\frac{1}{2}-\frac{k}{2}+k}{(a-\frac{1}{2}-\frac{n}{2})_{1+k}}(1+k)_n\\
 &&\:\:\times\:
  \frac{(1)_k(2a-1)_k(b)_k(c)_k(\frac{1}{2}+2a-b-c)_k}{(1+2a-2b)_k(1+2a-2c)_k(2b+2c-2a)_k}\\
&& \:\:=\:\frac{(1)_n(2b-2a)_n(2a-1)_n}{(1+2a-2c)_n(2b+2c-2a)_n}
 \frac{(\frac{1-n}{2}+a-c)_n(b+c-a-\frac{n}{2})_n}{(a-\frac{1+n}{2})_n(b-a-\frac{n}{2})_n},
 \enm
which fits to \eqref{inver-a} with
 \bnm
&&\qquad x=a,\quad y=a-\frac{1}{2}, \quad z=\frac{1}{2}; \\
&&\qquad g(k)= \frac{(1)_k(2a-1)_k(b)_k(c)_k(\frac{1}{2}+2a-b-c)_k}{(1+2a-2b)_k(1+2a-2c)_k(2b+2c-2a)_k};\\
&&\qquad f(n)=\frac{(1)_n(2b-2a)_n(2a-1)_n}{(1+2a-2c)_n(2b+2c-2a)_n}
 \frac{(\frac{1-n}{2}+a-c)_n(b+c-a-\frac{n}{2})_n}{(a-\frac{1+n}{2})_n(b-a-\frac{n}{2})_n}.
 \enm
Then \eqref{inver-b} gives the dual relation
 \bnm
&&\qdn\sum_{k=0}^n(-1)^k\binm{n}{k}\bigg(a+\frac{k}{2}\bigg)_n
\bigg(a-\frac{1}{2}-\frac{k}{2}\bigg)_n\frac{1+2k}{(1+n)_{1+k}}\\
 &&\qdn\:\times\:\,
\frac{(1)_k(2b-2a)_k(2a-1)_k}{(1+2a-2c)_k(2b+2c-2a)_k}
 \frac{(\frac{1-k}{2}+a-c)_k(b+c-a-\frac{k}{2})_k}{(a-\frac{1+k}{2})_k(b-a-\frac{k}{2})_k}\\
&&\qdn\:=\:\frac{(1)_n(2a-1)_n(b)_n(c)_n(\frac{1}{2}+2a-b-c)_n}{(1+2a-2b)_n(1+2a-2c)_n(2b+2c-2a)_n}.
 \enm
Writing it in terms of Fox-Wright function, we obtain Theorem
\ref{thm-d}.
\end{proof}

When $b\to\infty$, Theorem \ref{thm-d} offers the following formula.

\begin{corl}\label{corl-c}
Let $a$ and $c$ be complex numbers. Then
 \bnm
&&\xqdn{_4\Psi_5}\ffnk{ccccccc}{-1}
{(1,\frac{3}{2};1),(a+n;\frac{1}{2}),(-\frac{1}{2}+a+n;-\frac{1}{2})}
{(1+n;-1),(2+n,\frac{1}{2};1),(1+a-c;\frac{1}{2}),(\frac{1}{2}+a-c;-\frac{1}{2})}\\
 &&\xqdn\:\:=\:
\frac{1}{2^{2c+2n-1}}\frac{(c)_n}{(1)_n}\frac{\Gamma(2a-1+n)}{\Gamma(1+2a-2c+n)}.
 \enm
\end{corl}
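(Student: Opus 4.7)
The plan is to derive Corollary \ref{corl-c} directly from Theorem \ref{thm-d} by passing to the limit $b\to\infty$. Since the $_6\Psi_7$ on the left-hand side of Theorem \ref{thm-d} is a terminating series (truncated at $k=n$ by the factor $1/\Gamma(1+n-k)$ coming from the entry $(1+n;-1)$), the limit commutes with the sum, so it suffices to compare the asymptotics of the two sides as $b\to\infty$.

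First I would locate the $b$-dependent factors inside the $_6\Psi_7$. For each summation index $k$, the only gammas involving $b$ are $\Gamma(\tfrac12-a+b+\tfrac{k}{2})$ and $\Gamma(b-a-\tfrac{k}{2})$ in the numerator, and $\Gamma(\tfrac12-a+b+c+\tfrac{k}{2})$ and $\Gamma(b+c-a-\tfrac{k}{2})$ in the denominator. Applying the standard ratio asymptotic $\Gamma(b+u)/\Gamma(b+v)\sim b^{u-v}$ as $b\to\infty$, both pairs contribute a factor of $b^{-c}$, so every summand acquires a uniform factor $b^{-2c}$. Extracting this factor and dropping the four $b$-containing pairs $(\tfrac12-a+b;\tfrac12)$, $(b-a;-\tfrac12)$, $(\tfrac12-a+b+c;\tfrac12)$, $(b+c-a;-\tfrac12)$ leaves precisely the $_4\Psi_5$ written in Corollary \ref{corl-c}.

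For the right-hand side I would apply $(x)_n\sim x^n$ to each Pochhammer symbol with large argument: $(\tfrac12+2a-b-c)_n\sim(-b)^n$, $(b)_n\sim b^n$, $(1+2a-2b)_n\sim(-2b)^n$, and $\Gamma(2b-2a)/\Gamma(2b+2c-2a+n)\sim(2b)^{-2c-n}$. The signs cancel, the net power of $b$ is $n+n-n-2c-n=-2c$, and the net power of $2$ is $2^{-n}\cdot 2^{-2c-n}=2^{-2c-2n}$. Combining with the leading constant $2$ and the surviving factors, the right-hand side of Theorem \ref{thm-d} is asymptotic to
\[
b^{-2c}\cdot\frac{1}{2^{2c+2n-1}}\cdot\frac{(c)_n}{(1)_n}\cdot\frac{\Gamma(2a-1+n)}{\Gamma(1+2a-2c+n)},
\]
matching the $b^{-2c}$ factor extracted from the left.

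Multiplying both sides of Theorem \ref{thm-d} by $b^{2c}$ and letting $b\to\infty$ then yields Corollary \ref{corl-c}. There is no serious obstacle here: the only step requiring care is bookkeeping the signs and the powers of $2$ on the right-hand side, together with the observation that the terminating nature of the sum legitimizes the term-by-term passage to the limit.
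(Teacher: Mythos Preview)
Your proposal is correct and follows exactly the approach indicated in the paper, which simply states that Corollary~\ref{corl-c} follows from Theorem~\ref{thm-d} by letting $b\to\infty$. You have merely filled in the asymptotic bookkeeping that the paper leaves implicit, and your computation of the powers of $b$ and of $2$ is accurate.
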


Fixing $c=1-m-n$ in Corollary \ref{corl-c}, we get the following
identity.

\begin{corl}\label{corl-d}
Let $a$ be a complex number and $m$ a nonnegative integer. Then
 \bnm
&&\xqdn{_{3+2m}F_{2+2m}}\ffnk{ccccccc}{1}
{1,\frac{3}{2},\:\{2a+2n+2i-2\}_{i=1}^m,\:\{3-2a-2n-2i\}_{i=1}^m,\:-n}
{\frac{1}{2},\:\{2a+2n+2i-1\}_{i=1}^m,\:\{4-2a-2n-2i\}_{i=1}^m,\:2+n}\\
 &&\xqdn\:\:=\:
(-1)^n\frac{(2)_n(m)_n}{(2a-1+n)_n(2a-1+2m+2n)_n}.
 \enm
\end{corl}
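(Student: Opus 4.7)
The plan is to specialize $c=1-m-n$ in Corollary~\ref{corl-c} and rewrite the resulting ${}_4\Psi_5$ Fox-Wright series as the classical ${}_{3+2m}F_{2+2m}$ appearing on the left-hand side. The key observation is that this specialization turns the two $c$-dependent denominator parameters into $(a+m+n;\tfrac{1}{2})$ and $(a+m+n-\tfrac{1}{2};-\tfrac{1}{2})$, which differ from the corresponding numerator parameters $(a+n;\tfrac{1}{2})$ and $(a+n-\tfrac{1}{2};-\tfrac{1}{2})$ only by the integer shift $m$. Consequently, the half-step gamma ratios collapse to finite Pochhammer products $(a+n+k/2)_m$ and $(a+n-\tfrac{1}{2}-k/2)_m$, each of which I would expand as a product of $m$ linear factors in $k$:
\[
(a+n+k/2)_m=2^{-m}\prod_{i=1}^m(2a+2n+2i-2+k),\qquad (a+n-\tfrac{1}{2}-k/2)_m=2^{-m}\prod_{i=1}^m(2a+2n+2i-3-k).
\]

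Next I would absorb every linear factor into a Pochhammer ratio via the elementary identities
\[
c+k=c\cdot\frac{(c+1)_k}{(c)_k},\qquad c-k=c\cdot\frac{(1-c)_k}{(-c)_k},
\]
which produce exactly the numerator parameters $\{2a+2n+2i-2,\,3-2a-2n-2i\}_{i=1}^m$ and denominator parameters $\{2a+2n+2i-1,\,4-2a-2n-2i\}_{i=1}^m$ listed in Corollary~\ref{corl-d}. Combining this with the routine reductions $\Gamma(3/2+k)/\Gamma(1/2+k)=(1/2)(3/2)_k/(1/2)_k$, $1/\Gamma(1+n-k)=(-1)^k(-n)_k/n!$, and $1/\Gamma(2+n+k)=1/[\Gamma(2+n)(2+n)_k]$, the alternating sign $(-1)^k$ arising from $z=-1$ cancels the sign coming from the $(-n)_k$-conversion, so the Fox-Wright series becomes a $k$-independent prefactor times the advertised ${}_{3+2m}F_{2+2m}(1;\ldots)$.

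To match the right-hand side, I would then simplify the right-hand side of Corollary~\ref{corl-c} at $c=1-m-n$ using $2^{2c+2n-1}=2^{1-2m}$, $(c)_n=(-1)^n(m)_n$, and $\Gamma(2a-1+n)/\Gamma(1+2a-2c+n)=1/(2a-1+n)_{2m+2n}$, and evaluate the $k=0$ prefactor of the Fox-Wright series using the duplication formula $\Gamma(z)\Gamma(z+\tfrac{1}{2})=2^{1-2z}\sqrt{\pi}\,\Gamma(2z)$ at $z=a+n-\tfrac{1}{2}$ and $z=a+m+n-\tfrac{1}{2}$, obtaining $\Gamma(a+n)\Gamma(a+n-\tfrac12)/[\Gamma(a+m+n)\Gamma(a+m+n-\tfrac12)]=2^{2m}/(2a+2n-1)_{2m}$. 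The factorization $(2a-1+n)_{2m+2n}=(2a-1+n)_n(2a-1+2n)_{2m}(2a-1+2m+2n)_n$ then cancels the $(2a+2n-1)_{2m}$ produced by the prefactor, and the elementary identity $(n+1)!=(2)_n$ yields the stated closed form. The main obstacle is purely bookkeeping: one must correctly pair each of the $2m$ linear factors with its Pochhammer image, track the exponents of $2$ produced by the duplication formula alongside the explicit $2^{1-2m}$ power from the right-hand side, and verify that the termination condition $k\le n$ carries through the $(-n)_k$-rewrite without stray signs.
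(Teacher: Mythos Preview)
Your proposal is correct and follows precisely the paper's approach: the paper's entire proof is the single sentence ``Fixing $c=1-m-n$ in Corollary~\ref{corl-c}, we get the following identity,'' and you have supplied the routine bookkeeping that this specialization entails. The one cosmetic imprecision is that the gamma ratios $\Gamma(a+n+k/2)/\Gamma(a+m+n+k/2)$ and its companion collapse to the \emph{reciprocals} of $(a+n+k/2)_m$ and $(a+n-\tfrac12-k/2)_m$, not to the Pochhammer products themselves; your subsequent parameter identification is nonetheless consistent with this, so the argument goes through.
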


\begin{thm}\label{thm-e}
Let $a$ and $b$ be complex numbers. Then
 \bnm
&&\xxqdn\xqdn{_7\Psi_6}\ffnk{ccccccc}{\frac{1}{4}}
{(1,\frac{3}{2},1-a-b,b-a,a+n;1),(\frac{1-a-b}{2},\frac{b-a}{2};-\frac{1}{2})}
{(1+n;-1),(2+n,2-a-n,\frac{1}{2};1),(\frac{1-a-b}{2},\frac{b-a}{2};\frac{1}{2})}\\
 &&\xxqdn\xqdn\:\:=\:
\frac{(b)_n(1-b)_n}{(a+b)_n(1+a-b)_n}\frac{\Gamma(a+n)\Gamma(1-a-b)\Gamma(b-a)}{2\Gamma(1+n)\Gamma(2-a)}.
 \enm
\end{thm}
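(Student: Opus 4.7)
The plan is to mirror the inversion strategy used in the proofs of Theorems \ref{thm-a} and \ref{thm-d}: locate a terminating classical hypergeometric evaluation whose right-hand side, after parameter shifts, fits the mould of $f(n)$ in \eqref{inver-a}, and then read off Theorem \ref{thm-e} as the Fox-Wright translation of the dual sum \eqref{inver-b} guaranteed by Lemma \ref{lemm-a}.

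First I would hunt for a terminating $_{p+1}F_p$-identity, presumably of Watson or Chu type (a natural candidate being another corollary from \cite{chu-e}, parallel to the $_7F_6$-evaluation invoked in Theorem \ref{thm-d}), whose free parameters can be specialized so that it contains $b$, $1-b$, $-n$, and a balancing shift in $a$. The prefactor $\Gamma(a+n)\Gamma(1-a-b)\Gamma(b-a)/\Gamma(2-a)$, together with the balanced numerator $(b)_n(1-b)_n$ and denominator $(a+b)_n(1+a-b)_n$, is strongly indicative of an identity arising from a quadratic transformation, where the factor $\tfrac{1}{2}$ in the transformation argument later combines to produce the $\tfrac{1}{4}$ appearing on the left of Theorem \ref{thm-e}. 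Upon substituting the free parameter by $-n$ in that identity, the sum acquires the exact $x,y,z$-structure of \eqref{inver-a}.

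Next, reading off $x$, $y$, $z$ and the functions $g(k)$ and $f(n)$ from the specialized identity, I would apply Lemma \ref{lemm-a}. The dual equation \eqref{inver-b} then yields a single terminating sum in $k$, and the final step is to rewrite each occurring shifted factorial $(c)_n$ as the gamma ratio $\Gamma(c+n)/\Gamma(c)$, regroup the gammas by step-size, and collect them under the Fox-Wright symbol $(x_1,\ldots,x_r;A)$. The mixed step-sizes $A\in\{1,-1,\tfrac{1}{2},-\tfrac{1}{2}\}$ appearing in the statement are precisely what one expects when half-integer shifts from the starting identity are left in their $\Gamma$-form rather than collapsed into powers of $2$, and the factor $\tfrac{1}{4}$ on the argument side emerges as the unabsorbed remnant of the duplication-formula factors $2^{\pm k}$.

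The main obstacle is locating the correct starting identity: many candidate evaluations at argument $\tfrac{1}{4}$ produce, after inversion, right-hand sides differing from the one above by a shifted balancing parameter, and a careless choice will misplace the $\Gamma(b-a)$ or $\Gamma(1-a-b)$ factor. Once the correct classical identity is in hand, the subsequent manipulation is routine: the appeal to Lemma \ref{lemm-a} is mechanical, and the only real care needed is in the bookkeeping of gammas when reassembling the $_7\Psi_6$-symbol on the left.
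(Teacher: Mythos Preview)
Your overall inversion strategy is exactly the one the paper uses, and your description of the final bookkeeping (gamma ratios, mixed step-sizes, the $\tfrac14$ from duplication factors) is accurate. The genuine gap is the one you flag yourself: you have not located the starting evaluation, and your guess that it is ``another corollary from \cite{chu-e}'' points in the wrong direction. The paper does not reuse a Chu-type $_7F_6$-evaluation here.

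What the paper actually does is manufacture the needed evaluation from two standard ingredients. It takes the Whipple-type transformation (cf.\ \cite{andrews-r}[p.\ 147])
\[
{_6F_5}\!\left[\begin{array}{c}a,\,1+\tfrac a2,\,b,\,c,\,d,\,e\\ \tfrac a2,\,1+a-b,\,1+a-c,\,1+a-d,\,1+a-e\end{array}\,\Big|\,{-1}\right]
=\frac{\Gamma(1+a-b)\Gamma(1+a-c)}{\Gamma(1+a)\Gamma(1+a-b-c)}\;{_3F_2}\!\left[\begin{array}{c}1+a-d-e,\,b,\,c\\ 1+a-d,\,1+a-e\end{array}\,\Big|\,1\right],
\]
specializes $d=1-b$, $e=1-c$ so that the $_3F_2$ on the right becomes Dixon-summable, and thereby obtains a closed form for
\[
{_6F_5}\!\left[\begin{array}{c}a,\,1+\tfrac a2,\,b,\,1-b,\,c,\,1-c\\ \tfrac a2,\,1+a-b,\,a+b,\,1+a-c,\,a+c\end{array}\,\Big|\,{-1}\right].
\]
Setting $c=-n$ makes this terminate and puts it in the shape of \eqref{inver-a} with $x=a$, $y=a-1$, $z=1$; the dual \eqref{inver-b} then rewrites, after the routine gamma reorganization you describe, as the stated $_7\Psi_6$. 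So the missing idea is not a quadratic transformation or a Chu corollary but the specific pairing of the $_6F_5\to{_3F_2}$ reduction with Dixon's theorem; once you plug that in, the rest of your plan goes through verbatim.
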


\begin{proof}
Setting $d=1-b$ and $e=1-c$ in the transformation formula (cf.
\cite{andrews-r}[p. 147])
 \bnm
 &&\qqdn\xxqdn{_6F_5}\ffnk{ccccccc}{-1}{a,&1+a/2,&b,&c,&d,&e}
 {&a/2,&1+a-b,&1+a-c,&1+a-d,&1+a-e} \\
&&\xxqdn=\frac{\Gamma(1+a-b)\Gamma(1+a-c)}{\Gamma(1+a)\Gamma(1+a-b-c)}
{_3F_2}\!\ffnk{cccc}{1}{1+a-d-e,\quad b,\quad c}
{\quad1+a-d,\quad1+a-e}
 \enm
 and calculating the series on the right hand side by
 Dixon's $_3F_2
     $-series identity(cf. \cite{andrews-r}[p. 72]):
 \bnm
 &&\xxqdn\xxqdn\qqdn_3F_2\ffnk{cccc}{1}{a,b,c}{1+a-b,1+a-c}\\
&&\xxqdn\xxqdn\qqdn\:\:=\:\frac{\Gamma(1+\frac{a}{2})\Gamma(1+a-b)\Gamma(1+a-c)\Gamma(1+\frac{a}{2}-b-c)}
{\Gamma(1+a)\Gamma(1+\frac{a}{2}-b)\Gamma(1+\frac{a}{2}-c)\Gamma(1+a-b-c)},
 \enm
where $Re(1+\frac{a}{2}-b-c)>0$, we gain
 \bnm
 &&\qqdn{_6F_5}\ffnk{ccccccc}{-1}{a,1+\frac{a}{2},b,1-b,c,1-c}
 {\frac{a}{2},1+a-b,a+b,1+a-c,a+c}\\
&&=\frac{\pi\Gamma(a+b)\Gamma(1+a-b)\Gamma(a+c)\Gamma(1+a-c)}
{2^{2a-1}\Gamma(a)\Gamma(1+a)\Gamma(\frac{a+b+c}{2})\Gamma(\frac{1+a+b-c}{2})\Gamma(\frac{1+a-b+c}{2})\Gamma(\frac{2+a-b-c}{2})}
 \enm
provided that $Re(a)>0$. The case $c=-n$ of it can be manipulated as
 \bnm
&&\xqdn\sum_{k=0}^n(-1)^k\binm{n}{k}\frac{a+k+k}{(a+n)_{1+k}}
  \frac{a-1-k+k}{(a-1-n)_{1+k}}(1+k)_n
  \frac{(1)_k(a)_k(b)_k(1-b)_k}{(1+a-b)_k(a+b)_k}(-1)^k\\
&&\xqdn\:\:=\:\frac{(1)_n(a)_n(1-a-b)_n(b-a)_n}{(1+a)_n(-a)_n(2-a)_n}
 \frac{(-\frac{a+n}{2})_n(\frac{1-a-n}{2})_n}{(\frac{1-a-b-n}{2})_n(\frac{b-a-n}{2})_n},
 \enm
which suits to \eqref{inver-a} with
 \bnm
&&\xxqdn x=a,\quad y=a-1, \quad z=1; \\
&&\xxqdn g(k)=\frac{(1)_k(a)_k(b)_k(1-b)_k}{(1+a-b)_k(a+b)_k}(-1)^k;\\
&&\xxqdn
f(n)=\frac{(1)_n(a)_n(1-a-b)_n(b-a)_n}{(1+a)_n(-a)_n(2-a)_n}
 \frac{(-\frac{a+n}{2})_n(\frac{1-a-n}{2})_n}{(\frac{1-a-b-n}{2})_n(\frac{b-a-n}{2})_n}.
 \enm
Then \eqref{inver-b} produces the dual relation
 \bnm
&&\xxqdn\sum_{k=0}^n(-1)^k\binm{n}{k}(a+k)_n(a-1-k)_n\frac{1+2k}{(1+n)_{1+k}}\frac{(1)_k(a)_k(1-a-b)_k(b-a)_k}{(1+a)_k(-a)_k(2-a)_k}\\
 &&\xxqdn\:\times\:\,
 \frac{(-\frac{a+k}{2})_k(\frac{1-a-k}{2})_k}{(\frac{1-a-b-k}{2})_k(\frac{b-a-k}{2})_k}
=\frac{(1)_n(a)_n(b)_n(1-b)_n}{(1+a-b)_n(a+b)_n}(-1)^n.
 \enm
Writing it according to Fox-Wright function, we achieve Theorem
\ref{thm-e}.
\end{proof}

The case $d\to\infty$ of \eqref{dougall} reads
 \bnm\qquad\quad
 _4F_3\ffnk{cccc}{-1}{a,1+\frac{a}{2},b,c}{\frac{a}{2},1+a-b,1+a-c}
=\frac{\Gamma(1+a-b)\Gamma(1+a-c)}{\Gamma(1+a)\Gamma(1+a-b-c)}
 \enm
with $Re(1+\frac{a}{2}-b-c)>0$. When $b\to\infty$, Theorem
\ref{thm-e} reduces to the special case of it:
 \bnm\qquad
 _4F_3\ffnk{cccc}{-1}{1,\frac{3}{2},a+n,-n}{\frac{1}{2},2-a-n,2+n}
=(-1)^n\frac{(2)_n}{(a-1)_n}.
 \enm

Taking $b=-2m$ in Theorem \ref{thm-e}, we attain the following
result.

\begin{corl}\label{corl-e}
Let $a$ be a complex number and $m$ a nonnegative integer. Then
 \bnm
&&\xqdn{_{5+4m}F_{4+2m}}\ffnk{ccccccc}{1}
{\sst1,\frac{3}{2},\:\{1-a+2i\}_{i=0}^m,\:\{a+2i\}_{i=1}^m,\:\{1-a-2i\}_{i=1}^m\:,\{2+a-2i\}_{i=1}^m,\:a+n,-n}
{\sst\frac{1}{2},\:\{1+a+2i\}_{i=0}^m,\:\{-a+2i\}_{i=1}^m,\:\{1+a-2i\}_{i=1}^m,\:\{2-a-2i\}_{i=1}^m,\:2-a-n,2+n}\\
 &&\xqdn\:\:=\:
(-1)^n\frac{(-2m)_n(1+2m)_n(2)_n}{(a-1)_n(a-2m)_n(1+a+2m)_n}.
 \enm
\end{corl}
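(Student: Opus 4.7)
The natural approach is to specialise Theorem~\ref{thm-e} at $b=-2m$ and show that the resulting Fox--Wright function collapses to the displayed generalised hypergeometric series. Under this substitution the four parameter pairs with half-integer coefficients in Theorem~\ref{thm-e} become $(\tfrac{1-a+2m}{2};\pm\tfrac12)$ and $(\tfrac{-2m-a}{2};\pm\tfrac12)$; since each value appears upstairs with coefficient $-\tfrac12$ and downstairs with $+\tfrac12$, the summand contains two ratios of the form $\Gamma(\tfrac{X-k}{2})/\Gamma(\tfrac{X+k}{2})$ for $X\in\{1-a+2m,\,-2m-a\}$. The task is to recast these into Pochhammer symbols of the standard form $(c)_k$, so that the Fox--Wright series becomes an ordinary $_pF_q$.

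The main technical step is the Legendre duplication formula $\Gamma(y)\Gamma(y+\tfrac12)=2^{1-2y}\sqrt\pi\,\Gamma(2y)$. Since the two values of $X$ differ by $2m+\tfrac12$ rather than exactly $\tfrac12$, one first strips off the integer offset by writing $\Gamma(\tfrac{1-a+2m-k}{2})=(\tfrac{1-a-2m-k}{2})_{2m}\,\Gamma(\tfrac{1-a-2m-k}{2})$ and then applies duplication to the pair of Gammas that now differs by exactly $\tfrac12$. The net effect is that the two half-integer ratios together produce a factor $4^{k}$ (which cancels the Fox--Wright argument $z=\tfrac14$), an ordinary Gamma quotient simplifying to $(-1)^{k}$ together with Pochhammer factors $(a+2m+1)_{k}$ and $(-a-2m)_{k}$ downstairs, and a polynomial quotient $((1-a-2m-k)/2)_{2m}/((1-a-2m+k)/2)_{2m}$.

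The final step is to convert this polynomial quotient into Pochhammer form. Each linear factor $k+c$ can be written as $c\,(c+1)_{k}/(c)_{k}$, and relabelling the $4m$ resulting indices by parity and sign splits them into exactly the four symmetric families $\{1-a+2i\}_{i=0}^{m-1}$, $\{a+2i\}_{i=1}^{m}$, $\{1-a-2i\}_{i=1}^{m}$, $\{2+a-2i\}_{i=1}^{m}$ upstairs and the corresponding $\{1+a+2i\}$, $\{-a+2i\}$, $\{1+a-2i\}$, $\{2-a-2i\}$ downstairs. Combined with the integer-coefficient Gamma factors (which supply the remaining $i=m$ members of $\{1-a+2i\}_{i=0}^{m}$ and $\{1+a+2i\}_{i=0}^{m}$) and with the prefactor of Theorem~\ref{thm-e} (simplified using $\Gamma(1-a+2m)=(1-a)_{2m}\Gamma(1-a)$, the reflection identity for $\Gamma(-2m-a)$, and the standard relations for $\Gamma(a+n)$ and $\Gamma(2-a)$), the whole expression reduces to the stated closed form. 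The main obstacle is precisely this parity/sign splitting of the polynomial quotient into the four symmetric families $\{1-a\pm 2i\}$, $\{a\pm 2i\}$; once those index ranges are carefully unfolded, the remainder is routine Pochhammer algebra.
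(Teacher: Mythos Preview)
Your proposal is correct and follows exactly the paper's approach: the paper derives Corollary~\ref{corl-e} simply by setting $b=-2m$ in Theorem~\ref{thm-e}, and you have supplied the technical details (duplication, Pochhammer shifts, and the parity/sign splitting of the resulting polynomial quotient) that the paper leaves implicit. There is nothing to add.
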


\acknowledgements

The authors are grateful to the reviewer for helpful comments.

\nocite{*}
\bibliographystyle{abbrvnat}
% use the following instead if you encounter problems
%\bibliographystyle{alpha}
\bibliography{sample-dmtcs}
\label{sec:biblio}

%\bibliographystyle{plain}
%%\bibliographystyle{unsrt}
%\bibliography{ref}

\end{document}